\documentclass[table]{amsart}
\usepackage[margin=1.1in]{geometry}
\usepackage{amscd,amsmath,amsxtra,amsthm,amssymb,stmaryrd,xr,mathrsfs,mathtools,enumerate,commath, comment, mathtools}
\usepackage{stmaryrd}
\usepackage{multirow}
\usepackage{xcolor}
\usepackage{commath}
\usepackage{comment}
\usepackage{tikz-cd}
\usepackage{longtable} 
\usepackage{pdflscape} 
\usepackage{booktabs}
\usepackage{hyperref}
\definecolor{vegasgold}{rgb}{0.77, 0.7, 0.35}
\definecolor{darkgoldenrod}{rgb}{0.72, 0.53, 0.04}
\definecolor{gold(metallic)}{rgb}{0.83, 0.69, 0.22}
\hypersetup{
 colorlinks=true,
 linkcolor=darkgoldenrod,
 filecolor=brown,      
 urlcolor=gold(metallic),
 citecolor=darkgoldenrod,
 }
\newtheorem{lthm}{Theorem}

\usepackage[all,cmtip]{xy}

\DeclareFontFamily{U}{wncy}{}
\DeclareFontShape{U}{wncy}{m}{n}{<->wncyr10}{}
\DeclareSymbolFont{mcy}{U}{wncy}{m}{n}
\DeclareMathSymbol{\Sh}{\mathord}{mcy}{"58}
\usepackage[T2A,T1]{fontenc}
\usepackage[OT2,T1]{fontenc}

\newtheorem{theorem}{Theorem}[section]
\newtheorem{lemma}[theorem]{Lemma}
\newtheorem{ass}[theorem]{Assumption}
\newtheorem*{theorem*}{Theorem}
\newtheorem*{ass*}{Assumption}
\newtheorem{definition}[theorem]{Definition}

\newtheorem{remark}[theorem]{Remark}

\newtheorem{proposition}[theorem]{Proposition}

\newcommand{\cK}{\mathcal{K}}

\newcommand{\Z}{\mathbb{Z}}
\newcommand{\Q}{\mathbb{Q}}
\newcommand{\F}{\mathbb{F}}

\newcommand{\cL}{\mathcal{L}}

\newcommand{\cO}{\mathcal{O}}
\newcommand{\cS}{\mathcal{S}}

\newcommand{\Sel}{\mathrm{Sel}}

\newcommand{\Gr}{\mathrm{Gr}}

\newcommand{\op}[1]{\operatorname{#1}}

\newcommand\mtx[4] { \left( {\begin{array}{cc}
 #1 & #2 \\
 #3 & #4 \\
 \end{array} } \right)}

 \DeclareMathSymbol{\Sh}{\mathord}{mcy}{"58}
 \makeatletter
\newcommand{\mylabel}[2]{#2\def\@currentlabel{#2}\label{#1}}
\makeatother

\numberwithin{equation}{section}

\begin{document}

\title[Selmer stability of congruent Galois representations]{Selmer stability in families of congruent Galois representations}

\author[A.~Ray]{Anwesh Ray}
\address[Ray]{Chennai Mathematical Institute, H1, SIPCOT IT Park, Kelambakkam, Siruseri, Tamil Nadu 603103, India}
\email{anwesh@cmi.ac.in}

\keywords{Selmer groups, congruences, p-ranks, density results, level raising}
\subjclass[2020]{11F80, 11R45, 11F11}

\maketitle

\begin{abstract}
In this article, we study the variation of Selmer groups in families of modular Galois representations that are congruent modulo a fixed prime $p \geq 5$. Motivated by analogies with Goldfeld’s conjecture on ranks in quadratic twist families of elliptic curves, we investigate the stability of Selmer groups defined over $\Q$ via Greenberg's local conditions under congruences of residual Galois representations. Let $X$ be a positive real number. Fix a residual representation $\bar{\rho}$ and a corresponding modular form $f$ of weight $2$ and optimal level. We count the number of level-raising modular forms $g$ of weight $2$ that are congruent to $f$ modulo $p$, with level $N_g\leq X$, such that the $p$-rank of the Selmer groups of $g$ equals that of $f$. Under some mild assumptions on $\bar{\rho}$, we prove that this count grows at least as fast as $X (\log X)^{\alpha - 1}$ as $X \to \infty$, for an explicit constant $\alpha > 0$. The main result is a partial generalization of theorems of Ono and Skinner on rank-zero quadratic twists to the setting of modular forms and Selmer groups.
\end{abstract}

\section{Introduction}

\subsection{Motivation and background}
\par The questions studied in this article are motivated by a conjecture of Goldfeld \cite{Goldfeld}, which concerns the distribution of Mordell–Weil ranks in families of quadratic twists of a fixed elliptic curve over $\mathbb{Q}$. Given an elliptic curve $E/\mathbb{Q}$, one can consider its quadratic twists $E_d$, indexed by square-free integers $d$. Goldfeld conjectured that, asymptotically, 50 percent of these twists should have rank 0 and 50 percent should have rank 1, with 0 percent having rank greater than 1. This conjecture is informed by analogies with random matrix theory, the Birch and Swinnerton-Dyer conjecture, and probabilistic models for Selmer groups. An important feature of the quadratic twist family is that the mod-2 Galois representation remains fixed: for all $d$, one has an isomorphism $E_d[2] \simeq E[2]$ of Galois modules. This \emph{mod-2 congruence} implies that the 2-Selmer groups of $E_d$ and $E$ are closely related—a theme that plays a central role in the work of Klagsbrun–Mazur–Rubin \cite{KMR}, Kriz–Li \cite{KrizLi}, and Smith \cite{smith2022distribution1, smith2022distribution2}. It is natural to study generalizations of this theme to $p$ congruences between modular Galois representations. We shall work with primes $p\geq 5$. Level--raising results of Diamond--Taylor \cite{diamond1994non} provide us with parameterizations of families of $p$-congruent Galois representations. Congruences have also played a significant role in Iwasawa theory. For a prime $p$, two elliptic curves $E$ and $E'$ over $\mathbb{Q}$ are said to be \emph{$p$-congruent} if there exists a Galois-module isomorphism $E[p] \simeq E'[p]$. Greenberg and Vatsal \cite{greenberg2000iwasawa} showed that for such $p$-congruent curves, the Iwasawa invariants of their $p$-primary Selmer groups over the cyclotomic $\mathbb{Z}_p$-extension of $\mathbb{Q}$ are explicitly related.

\subsection{Main result}
\par In this article, we study the behavior of Selmer groups in $p$-congruent families of modular Galois representations, with particular emphasis on their stability under congruences. We consider Selmer groups defined over $\Q$ by Greenberg's local conditions and investigate analogues of phenomena observed in the context of elliptic curves, aiming to understand how congruences between Galois representations influence the global structure of Selmer groups. The Selmer groups considered are closely related to Bloch--Kato Selmer groups, however are not the same.

\par We fix a prime $p \geq 5$, a $p$-adic field $\cK$ with valuation ring $\cO$, uniformizer $\varpi$, and residue field $\kappa$. Denote by $\chi$, the $p$-adic cyclotomic character. Let
$\bar{\rho} : \op{G}_\Q \to \op{GL}_2(\kappa)$
be a surjective, odd Galois representation with determinant $\bar{\chi}$, the mod-$p$ reduction of $\chi$. Let $N_{\bar{\rho}}$ be the prime to $p$ part of the Artin conductor of $\bar{\rho}$. By results of Khare and Wintenberger \cite{khare2009serre, khare2009serre2}, there exists a weight 2 Hecke eigenform $f$ of trivial nebentypus and optimal level $N_f = N_{\bar{\rho}}$ such that $\bar{\rho}_f = \bar{\rho}$. Associated with $f$ is a modular Galois representation $\rho_f:\op{G}_\Q\rightarrow \op{Aut}_{\cK} (V_f)\xrightarrow{\sim}\op{GL}_2(\cK)$, where $V_f\simeq \cK^2$ is the underlying vector space. There exists a Galois stable lattice $T_f\subset V_f$. We set $A_f:=V_f/T_f\simeq (\cK/\cO)^2$. Denote by $\op{Sel}_{\op{Gr}}(A_f/\Q)$ the Selmer group of $f$ over $\Q$ defined by Greenberg local conditions. This is a cofinitely generated $\varpi$-primary $\cO$-module, which contains the $\varpi$-primary Bloch--Kato Selmer group as a finite index submodule. For \( X > 0 \), let \( \cS_f(X) \) denote the set of natural numbers \( N \leq X \) such that \( N = N_g \) for some Hecke eigencuspform \( g \) of weight \( 2 \) satisfying:
\begin{itemize}
    \item \( \bar{\rho}_g \simeq \bar{\rho}_f \),
    \item $p\nmid N_g$,
    \item \( \dim \op{Sel}_{\op{Gr}}(A_g/\Q)[\varpi] = \dim \op{Sel}_{\op{Gr}}(A_f/\Q)[\varpi] \).
\end{itemize}
Define \( N_f(X) := \# \cS_f(X) \).

\begin{lthm}\label{main thm of intro}
With respect to notation as above, assume that no prime $\ell\equiv \pm 1\pmod{p}$ divides $N_{\bar{\rho}}$. Then,
$$
N_f(X) \gg X (\log X)^{\alpha - 1},
$$
\noindent where $\alpha := \frac{p - 3}{(p - 1)^2}$.
\end{lthm}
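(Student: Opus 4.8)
The plan is to adapt the strategy Ono and Skinner used for rank-zero quadratic twists: I would single out an explicit set $\mathcal{L}$ of primes at which the level of $f$ can be raised, via Diamond--Taylor \cite{diamond1994non}, without changing the $p$-rank of the Greenberg Selmer group, and then count squarefree products of primes of $\mathcal{L}$ below $X$ by an analytic estimate of Landau--Selberg--Delange type. First I would record the local criterion: for a prime $\ell\nmid N_{\bar\rho}p$, level raising at $\ell$ is possible precisely when $\bar\rho(\operatorname{Frob}_\ell)$ has $+1$ or $-1$ among its eigenvalues, equivalently $a_\ell(f)\equiv\pm(\ell+1)\pmod{\varpi}$. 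I would then set
\[
\mathcal{L}:=\bigl\{\,\ell\nmid N_{\bar\rho}p \;:\; -1\in\operatorname{spec}\bigl(\bar\rho(\operatorname{Frob}_\ell)\bigr),\ \det\bar\rho(\operatorname{Frob}_\ell)\notin\{1,-1\}\,\bigr\}.
\]
For $\ell\in\mathcal{L}$ the element $\bar\rho(\operatorname{Frob}_\ell)$ is regular semisimple with distinct eigenvalues $-1$ and $-\ell\bmod p$, neither equal to $+1$; the point of this choice is that then $H^0(\Q_\ell,\bar\rho)=0$, and since $\bar\rho$ is Cartier self-dual (because $\det\bar\rho=\bar\chi$), the local Euler characteristic formula at $\ell\ne p$ forces $H^1(\Q_\ell,\bar\rho)=0$ as well. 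Because $\bar\rho$ is surjective, Chebotarev equidistributes $\bar\rho(\operatorname{Frob}_\ell)$ among the conjugacy classes of $\op{GL}_2(\F_p)$; counting the classes that define $\mathcal{L}$ --- for each of the $p-3$ admissible determinants $d\notin\{0,1,-1\}$ there is one regular semisimple class, with eigenvalues $\{-1,-d\}$, of size $p(p+1)$ --- I would obtain that $\mathcal{L}$ has natural density $\tfrac{(p-3)p(p+1)}{\lvert\op{GL}_2(\F_p)\rvert}=\tfrac{p-3}{(p-1)^2}=\alpha>0$.

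Next I would establish Selmer stability under a single level raising at a prime of $\mathcal{L}$. Let $h$ be a weight-$2$ eigenform with $\bar\rho_h\simeq\bar\rho$, and let $h'$ be obtained from $h$ by raising the level at some $\ell\in\mathcal{L}$ with $\ell\nmid N_h$. Each of $\op{Sel}_{\op{Gr}}(A_h/\Q)[\varpi]$ and $\op{Sel}_{\op{Gr}}(A_{h'}/\Q)[\varpi]$ is computed, via the standard control sequence (cf.\ \cite{greenberg2000iwasawa}) --- which carries no correction term here, since $\bar\rho$ is irreducible and so $H^0(\Q,A_h)=H^0(\Q,A_{h'})=0$ --- by imposing the residual Greenberg local conditions inside $H^1(\Q,\bar\rho)$. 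These conditions agree for $h$ and $h'$ at every place other than $\ell$ (the two forms share the module $\bar\rho$, are both ordinary at $p$, and have the same behavior at all $q\mid N_h$), whereas at $\ell$ the unramified condition for $h$ and the Steinberg condition for $h'$ both lie inside $H^1(\Q_\ell,\bar\rho)=0$ and hence coincide trivially. Therefore $\dim\op{Sel}_{\op{Gr}}(A_{h'}/\Q)[\varpi]=\dim\op{Sel}_{\op{Gr}}(A_h/\Q)[\varpi]$. Iterating, any newform $g$ obtained from $f$ by successively raising the level at primes $\ell_1,\dots,\ell_r\in\mathcal{L}$ satisfies $\dim\op{Sel}_{\op{Gr}}(A_g/\Q)[\varpi]=\dim\op{Sel}_{\op{Gr}}(A_f/\Q)[\varpi]$, with $N_g=N_f\ell_1\cdots\ell_r$ and $p\nmid N_g$.

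Finally I would assemble the count. For every squarefree $m=\ell_1\cdots\ell_r$ with all $\ell_i\in\mathcal{L}$ and $N_f m\le X$, iterated Diamond--Taylor level raising furnishes a weight-$2$ newform $g$ of trivial nebentypus, level exactly $N_g=N_f m$, coprime to $p$, with $\bar\rho_g\simeq\bar\rho$; by the previous step $N_f m\in\cS_f(X)$, and distinct $m$ give distinct levels. Hence
\[
N_f(X)\ \ge\ \#\bigl\{\,m\le X/N_f \;:\; m\ \text{squarefree},\ \ell\mid m\Rightarrow\ell\in\mathcal{L}\,\bigr\},
\]
and since $\mathcal{L}$ has natural density $\alpha\in(0,1)$, the Landau--Selberg--Delange (Wirsing) estimate for integers all of whose prime factors lie in a set of primes of density $\alpha$ gives that the right-hand side is $\gg \tfrac{X}{N_f}(\log X)^{\alpha-1}$, whence $N_f(X)\gg X(\log X)^{\alpha-1}$.

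The step I expect to be the main obstacle is the control of iterated level raising: one must apply Diamond--Taylor at each new prime $\ell_i$ and be certain that the output is a newform with residual representation exactly $\bar\rho$, level exactly $N_f\ell_1\cdots\ell_i$, trivial nebentypus, and ordinary at $p$. It is precisely here that the hypothesis that no prime $\equiv\pm1\pmod p$ divides $N_{\bar\rho}$, together with the constraint $\ell_i\not\equiv\pm1\pmod p$ (automatic from $\det\bar\rho(\operatorname{Frob}_{\ell_i})\notin\{1,-1\}$), enters, keeping the level raising Steinberg and unobstructed and avoiding the $N\ell^2$ supercuspidal phenomenon. A secondary delicate point is the precise identification of $\op{Sel}_{\op{Gr}}(A_g/\Q)[\varpi]$ with the residual Selmer group of $\bar\rho$ and the collapse of the Steinberg local condition at each $\ell_i$; this is formal once $H^1(\Q_{\ell_i},\bar\rho)=0$ is in hand, but it is exactly why only the eigenvalue-$(-1)$ half of the level-raising primes is usable --- which is what pins the exponent in the theorem to $\alpha$ rather than to the density of all level-raising primes.
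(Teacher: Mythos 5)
Your overall architecture matches the paper's (the same set of admissible primes with Frobenius eigenvalues $\{-1,-\ell\}$ and $\ell\not\equiv\pm1\pmod p$, the same density $\frac{p-3}{(p-1)^2}$, Diamond--Taylor level raising, and a Serre-type count of squarefree products), and your observation that $H^0(\mathbb{Q}_\ell,\bar\rho)=0$ together with self-duality forces $H^1(\mathbb{Q}_\ell,\bar\rho)=0$ at the new primes is correct and in fact disposes of the local comparison there more cleanly than the paper's explicit matrix computation. But there is a genuine gap at the heart of the Selmer-stability step. You assert that $\operatorname{Sel}_{\operatorname{Gr}}(A_h/\mathbb{Q})[\varpi]$ is computed by residual local conditions inside $H^1(\mathbb{Q},\bar\rho)$ ``with no correction term, since $\bar\rho$ is irreducible.'' Irreducibility only kills the \emph{global} correction (it makes $H^1(\mathbb{Q},\bar\rho)\to H^1(\mathbb{Q},A_h)[\varpi]$ an isomorphism); it says nothing about the \emph{local} discrepancies between the unramified/Greenberg conditions for $A_h[\varpi]$ and the $\varpi$-torsion of those for $A_h$. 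These discrepancies are measured by the kernels $\ker h_\ell$ (the quantities $\beta_\ell(A_h)$ in the paper), which are Tamagawa-type terms controlled by $\bigl(H^0(\operatorname{I}_\ell,A_h)/\varpi\bigr)^{\operatorname{Frob}}$ and can be nonzero at ramified primes. Your sentence that the conditions for $h$ and $h'$ ``have the same behavior at all $q\mid N_h$'' is exactly the unproved point: at primes $q\mid N_{\bar\rho}$ one must actually show $\beta_q=0$ for \emph{each} form, and this is where the hypothesis that no prime $\equiv\pm1\pmod p$ divides $N_{\bar\rho}$ is really used (the paper's computation with $\sigma_q\tau_q\sigma_q^{-1}=\tau_q^{q}$ shows the Frobenius eigenvalue on the inertia invariants is $dq$ with $d=\pm1$, which is $\equiv 1$ only if $q\equiv\pm1\pmod p$). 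You instead attribute that hypothesis to keeping the level raising ``unobstructed,'' which is a misdiagnosis: Carayol/Diamond--Taylor need no such hypothesis to produce forms of level $N_{\bar\rho}\ell_1\cdots\ell_r$; the hypothesis is needed precisely to kill the local correction terms at $q\mid N_{\bar\rho}$. A parallel (smaller) omission occurs at $p$: the vanishing of the correction there requires the argument that $A_h^-$ is $\varpi$-divisible with trivial inertia action, not just ordinarity in the abstract.

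Two further remarks. The ``main obstacle'' you flag --- iterated level raising --- is a non-issue if you follow the paper: Carayol's conditions are checked prime by prime, so $N_{\bar\rho}\ell_1\cdots\ell_r$ lies in $\mathcal{S}(\bar\rho)$ and a single application of Diamond--Taylor produces a newform of exactly that level; moreover $p$-ordinarity of the resulting $g$ is automatic in weight $2$ with $p\nmid N_g$ because $\bar\rho|_{\operatorname{G}_p}$ is reducible. Conversely, the point you treat as secondary (the identification of $\operatorname{Sel}_{\operatorname{Gr}}(A_g/\mathbb{Q})[\varpi]$ with the residual Selmer group) is the actual mathematical content of the theorem, and as written your proposal does not supply the local arguments at $q\mid N_{\bar\rho}$ (nor, explicitly, at $p$) that make it true.
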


This theorem may be viewed as an analogue, in the setting of modular forms and Galois representations, of classical results related to Goldfeld’s conjecture. For an elliptic curve $E/\Q$, let $N_E^0(X)$ denote the number of squarefree integers $d$ with $|d| < X$ for which the quadratic twist $E^{(d)}$ has analytic rank 0. Ono and Skinner \cite{OnoSkinner} showed that $N_E^0(X) \gg \frac{X}{\log X}$. Later, Ono \cite{OnoCrelle} refined this result: if $E(\Q)[2] = 0$, then there exists an explicit constant $\alpha(E) \in (0,1)$ such that

$$
N_E^0(X) \gg \frac{X}{(\log X)^{1 - \alpha(E)}}.
$$
\noindent For $2$-Selmer groups of elliptic curves in quadratic twist families of elliptic curves over number fields, results similar to the Theorem above have been proven by Mazur and Rubin (see \cite[Theorem 1.4]{MRH10}).

\section*{Statements and Declarations}
\subsection*{Acknowledgement}
We thank the anonymous referee for the excellent report.
\subsection*{Conflict of interest} The author reports that there are no conflicts of interest to declare.

\subsection*{Data Availability} There is no data associated to the results of this manuscript.

\section{Modular forms and their Selmer groups}

\par In this section, we discuss modular Galois representations, congruences and associated Selmer groups. We begin by setting relevant notation.
\subsection{Notation}
\begin{itemize}
    \item Throughout, \( p \) will be an odd prime number.  
    \item Let $\F_p$ be the finite field with $p$ elements. Given an $\F_p$ vector space $V$, set $V^\vee:=\op{Hom}\left(V, \F_p\right)$.
    \item Let \( \op{G}_\Q := \op{Gal}(\bar{\Q}/\Q) \) be the absolute Galois group of \( \Q \).
    \item Denote by $\chi:\op{G}_{\Q}\rightarrow \Z_p^\times$ the $p$-adic cyclotomic character and let $\bar{\chi}$ be its mod-$p$ reduction.
    \item Let \( \tau \) be a variable in the complex upper half-plane. Setting \( q := \exp(2\pi i \tau) \), let \( f = \sum_{n=1}^\infty a_n(f) q^n \) be a normalized Hecke eigenform of weight \( k \geq 2\) and level \( N_f \). Let \( F_f \subset \bar{\Q}\) denote the number field generated by its Fourier coefficients.
    \item For each prime \( \ell \), let \( \bar{\Q}_\ell \) be a fixed algebraic closure of \( \Q_\ell \), and fix an embedding $\iota_\ell: \bar{\Q} \hookrightarrow \bar{\Q}_\ell$. Let \( \op{G}_\ell := \op{Gal}(\bar{\Q}_\ell/\Q_\ell) \) be the absolute Galois group of \( \Q_\ell \). We adopt the short hand notation $H^i(\Q_\ell, \cdot):=H^i(\op{G}_\ell, \cdot)$. The embedding $\iota_\ell$ induces an inclusion 
    \[\iota_\ell^*: \op{G}_\ell\hookrightarrow \op{G}_\Q.\]
    \item The inertia subgroup of $\op{G}_\ell$ is denoted by $\op{I}_\ell$ and let $\sigma_\ell\in \op{G}_\ell/\op{I}_\ell$ be the Frobenius element. When there is no cause for confusion, we occasionally choose a lift in $\op{G}_\ell$ of the Frobenius, which we shall also denote by $\sigma_\ell$.
    \item Given a module $M$ over $\op{G}_\ell$, the unramified classes in $H^1(\Q_\ell, M)$ are defined as follows:
\[H^1_{\op{nr}}(\Q_\ell, M):=\op{image}\left\{ H^1(\op{G}_\ell/\op{I}_\ell, M^{\op{I}_\ell})\longrightarrow H^1(\Q_\ell, M)\right\}.\]
    \item The embedding \( \iota_\ell \) induces an injection \( \iota_\ell^* : \op{G}_\ell \hookrightarrow \op{G}_\Q \). Let $\cK/\Q_p$ be the finite extension generated by $\iota_p(F_f)$, let $\cO$ be the valuation ring of $\cK$ and $\varpi$ be a uniformizer of $\cO$. Set $\kappa$ to denote the residue field $\cO/(\varpi)$. If $\rho$ is a representation of $\op{G}_\Q$, denote by $\rho_{|\ell}$ the restriction of $\rho$ to $\op{G}_p$. Then $\rho$ is \emph{unramified at $\ell$} if $\rho_{|\op{I}_\ell}$ is trivial. In this case, $\rho(\sigma_\ell)$ is well defined.
    \item Given a finite set of primes $S$, let $\Q_S\subset \bar{\Q}$ be the maximal extension of $\Q$ in which all primes $\ell\notin S$ are unramified. Set $\op{G}_S:=\op{Gal}(\Q_S/\Q)$ and $H^i(\Q_S/\Q, \cdot):=H^i(\op{G}_S, \cdot)$.
\end{itemize}

\subsection{Selmer and dual Selmer data}
\par Let \( S \) be a finite set of rational primes which includes \( p \) and suppose \( M \) is a finite-dimensional vector space over \( \mathbb{F}_p \), equipped with a continuous action of \( \operatorname{G}_S\). For \( i = 1,2 \), define
\[
\Sh_S^i(M) := \ker\left( H^i(\mathbb{Q}_S/\mathbb{Q}, M) \longrightarrow \bigoplus_{\ell \in S} H^i(\mathbb{Q}_\ell, M) \right).
\]
Now, for each prime \( \ell \in S \), fix a choice of subspace \( \mathcal{L}_\ell \subseteq H^1(\mathbb{Q}_\ell, M) \). The collection \( \mathcal{L} = \{ \mathcal{L}_\ell \}_{\ell \in S} \) specifies a local condition at each prime. Using these local conditions, one defines the \emph{Selmer group} \( H^1_{\mathcal{L}}(\mathbb{Q}_S/\mathbb{Q}, M) \) as follows:
\[
H^1_{\mathcal{L}}(\mathbb{Q}_S/\mathbb{Q}, M) := \ker\left( H^1(\mathbb{Q}_S/\mathbb{Q}, M) \longrightarrow \bigoplus_{\ell \in S} \frac{H^1(\mathbb{Q}_\ell, M)}{\mathcal{L}_\ell} \right).
\]
One introduces the \emph{dual module} \( M^* := \operatorname{Hom}(M, \mu_p) \), where \( \mu_p \) denotes the group of \( p \)-th roots of unity (viewed as a Galois module). 

With respect to the Tate pairing, for each \( \ell \in S \), the \emph{orthogonal complement} \( \mathcal{L}_\ell^\perp \subseteq H^1(\mathbb{Q}_\ell, M^*) \) of \( \mathcal{L}_\ell \) is defined to consist of all elements of \( H^1(\mathbb{Q}_\ell, M^*) \) that pair trivially with every element of \( \mathcal{L}_\ell \). The \emph{dual Selmer group} \( H^1_{\mathcal{L}^\perp}(\mathbb{Q}_S/\mathbb{Q}, M^*) \) is defined to be the subgroup of \( H^1(\mathbb{Q}_S/\mathbb{Q}, M^*) \) consisting of classes whose local restrictions lie inside the subspaces \( \mathcal{L}_\ell^\perp \) for each \( \ell \in S \), that is,
\begin{equation}\label{dselmergp}
H^1_{\mathcal{L}^\perp}(\mathbb{Q}_S/\mathbb{Q}, M^*) := \ker\left( H^1(\mathbb{Q}_S/\mathbb{Q}, M^*) \longrightarrow \bigoplus_{\ell \in S} \frac{H^1(\mathbb{Q}_\ell, M^*)}{\mathcal{L}_\ell^\perp} \right).
\end{equation}

\subsection{Modular Galois representations and level raising}
\par Let $f$ be a Hecke eigencuspform and denote the integral representation by $\rho_{T_f}:\op{G}_\Q\rightarrow \op{GL}_2(\cO)$. Let $\overline{\rho_{T_f}}:=\rho_{T_f}\pmod{(\varpi)}$ be the mod-$(\varpi)$ reduction of $\rho_{T_f}$. 

\begin{ass}
Assume throughout this article:
\begin{enumerate}
\item[(i)] $p \nmid N_f$,
\item[(ii)] the residual representation $\overline{\rho_{T_f}}$ is absolutely irreducible.
\item[(iii)] The modular form $f$ has weight $2$ with trivial nebentype. 
\item[(iv)] Assume that $f$ is $p$-ordinary, which is to say that $\iota_p(a_p(f))$ is a $p$-adic unit.
\end{enumerate}
\end{ass}
Condition (i) implies that the local representation $\rho_{|p}$ is crystalline. Condition (ii) ensures that $T_f$ is, up to scaling, the unique Galois-stable lattice in $V_f$. In light of this, and with no risk of confusion, we shall henceforth write $\rho_f$ (respectively, $\overline{\rho_f}$) in place of $\rho_{T_f}$ (respectively, $\overline{\rho_{T_f}}$), thereby omitting the explicit reference to the choice of lattice. Condition (iii) implies that $\op{det}\rho_f=\chi$. Condition (iv) implies that $\rho_{| p}=\mtx{\chi\gamma}{\ast}{0}{\gamma^{-1}}$ where $\gamma:\op{G}_p\rightarrow \Z_p^\times$ is a finite order unramified character.

\par Let $c \in \operatorname{G}_\Q$ denote complex conjugation. A Galois character $\eta$ is said to be \emph{odd} if $\eta(c) = -1$, and a representation $\rho$ is \emph{odd} if $\det\rho(c) = -1$. A continuous representation  
\[
\bar{\rho} : \mathrm{Gal}(\overline{\Q}/\Q) \to \mathrm{GL}_2(\bar{\F}_p)
\]
is said to be \emph{modular} if it arises as the residual representation associated to a Hecke eigencuspform $f$, i.e., if $\bar{\rho} \simeq \bar{\rho}_f$ for some choice of embedding $F_f \hookrightarrow \bar{\Q}_p$.

\par The weak form of Serre’s conjecture \cite{serre1987representations} asserts that every odd, irreducible, and continuous representation $\bar{\rho}$ as above is modular. Let $N_{\bar{\rho}}$ denote the prime-to-$p$ part of the Artin conductor of $\bar{\rho}$. Note that $N_{\bar{\rho}}$ is also called the \emph{Serre conductor} of $\bar{\rho}$. A modular form $f$ is said to be of \emph{optimal level} for $\bar{\rho}$ if it is new of level $\Gamma_1(N_{\bar{\rho}})$ and satisfies $\bar{\rho}_f \simeq \bar{\rho}$. The strong form of the conjecture asserts that every odd, irreducible, and continuous representation $\bar{\rho}$ arises from a cuspidal newform $g$ of weight $k \geq 2$ and level $\Gamma_1(N_{\bar{\rho}})$, where the weight $k = k(\bar{\rho})$ is explicitly prescribed in \cite[§2]{serre1987representations}; see also \cite[p.11]{ribet1999lectures}. This conjecture was ultimately proven by Khare and Wintenberger in \cite{khare2009serre, khare2009serre2}, building on foundational results of Ribet, notably his level-lowering theorem \cite{ribetlevellowering}.

Assume that  $\rho_f$ lifts $\bar{\rho}$. Then the optimal level $N_{\bar{\rho}}$ necessarily divides $N_f$. Carayol \cite{carayol1989representations} proved that there are explicit conditions on integers $N_f$ that arise as levels modular forms $f$.

\begin{theorem}[Carayol]\label{carayol thm}
The level $N_f$ admits a factorization
\[
N_f = N_{\bar{\rho}} \prod_\ell \ell^{\alpha(\ell)},
\]
and for each prime $\ell$ with $\alpha(\ell) > 0$, one of the following conditions holds:
\begin{enumerate}
    \item $\ell \nmid N_{\bar{\rho}}$, $\ell(\mathrm{tr} \, \bar{\rho}(\sigma_\ell))^2 = (1+\ell)^2 \det \bar{\rho}(\sigma_\ell)$ in $\bar{\F}_p$, and $\alpha(\ell) = 1$;
    \item $\ell \equiv -1 \pmod{p}$, and one of the following:
    \begin{enumerate}
        \item $\ell \nmid N_{\bar{\rho}}$, $\mathrm{tr} \, \bar{\rho}(\sigma_\ell) \equiv 0$ in $\bar{\F}_p$, and $\alpha(\ell) = 2$;
        \item $\ell \mid N_{\bar{\rho}}$, $\det \bar{\rho}$ is unramified at $\ell$, and $\alpha(\ell) = 1$;
    \end{enumerate}
    \item $\ell \equiv 1 \pmod{p}$, and one of the following:
    \begin{enumerate}
        \item $\ell \nmid N_{\bar{\rho}}$ and $\alpha(\ell) = 2$;
        \item either $\ell \mid N_{\bar{\rho}}$ and $\alpha(\ell) = 1$, or $\ell \nmid N_{\bar{\rho}}$ and $\alpha(\ell) = 1$.
    \end{enumerate}
\end{enumerate}
\end{theorem}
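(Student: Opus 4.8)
The statement is \emph{local} in nature, so the plan is to fix an auxiliary prime $\ell\neq p$ and compare the exponent of $\ell$ in $N_f$ with its exponent in $N_{\bar\rho}$. Write $c_\ell(\rho)$ for the exponent of $\ell$ in the Artin conductor of a representation $\rho$ of $\op{G}_\ell$; then $N_f=\prod_{\ell\neq p}\ell^{c_\ell(\rho_f)}$ and, by the very definition of the Serre conductor, $N_{\bar\rho}=\prod_{\ell\neq p}\ell^{c_\ell(\bar\rho)}$, with no contribution at $p$ since $p\nmid N_f$ makes $\rho_f|_{\op{G}_p}$ crystalline. Putting $\alpha(\ell):=c_\ell(\rho_f)-c_\ell(\bar\rho)$, the theorem reduces to showing, for every $\ell\neq p$, that $0\leq\alpha(\ell)\leq 2$ and that $\alpha(\ell)\in\{1,2\}$ forces one of the tabulated conditions. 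My strategy is to split $c_\ell$ into its tame part $c_\ell^{\mathrm t}(\rho):=\dim\rho-\dim\rho^{\op{I}_\ell}$ and its Swan part $\op{Sw}_\ell(\rho)$, to prove the Swan part is unchanged by reduction modulo $\varpi$, and then to run the classification of $\rho_f|_{\op{G}_\ell}$ through local Langlands.

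I would first dispose of the wild part. Wild inertia $P_\ell\subseteq\op{I}_\ell$ is pro-$\ell$, and $\rho_f(P_\ell)$ is finite, since a pro-$\ell$ subgroup of $\op{GL}_2(\cO)$ meets the pro-$p$ congruence kernel trivially and hence injects into $\op{GL}_2(\kappa)$. For any finite $\ell$-group $Q$ with $p\neq\ell$, reduction modulo $\varpi$ is a bijection — compatible with direct sums and with the images in $Q$ of the ramification filtration — between continuous representations on finite free $\cO$-modules and those on $\kappa$-vector spaces (Maschke semisimplicity together with idempotent lifting). Applying this to $Q=\rho_f(P_\ell)$ and using that $\bar\rho|_{P_\ell}$ is the reduction of $\rho_f|_{P_\ell}$, the two break decompositions agree, whence $\op{Sw}_\ell(\rho_f)=\op{Sw}_\ell(\bar\rho)$ and $\alpha(\ell)=c_\ell^{\mathrm t}(\rho_f)-c_\ell^{\mathrm t}(\bar\rho)$. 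Since $T_f^{\op{I}_\ell}$ is a saturated $\cO$-submodule of the rank-two lattice $T_f$, its reduction embeds into $\bar\rho^{\op{I}_\ell}$, so $\op{rk}_\cO T_f^{\op{I}_\ell}\leq\dim_\kappa\bar\rho^{\op{I}_\ell}$ and therefore $0\leq\alpha(\ell)\leq 2$. This already yields the claimed factorization $N_f=N_{\bar\rho}\prod_\ell\ell^{\alpha(\ell)}$.

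Next I would invoke Grothendieck's $\ell$-adic monodromy theorem together with the description of $\rho_f|_{\op{G}_\ell}$ via the Weil--Deligne parameter of the local component $\pi_\ell$ of the automorphic representation of $f$: weight $2$ with trivial nebentype forces $\det\rho_f=\chi$, hence $\det\rho_f|_{\op{G}_\ell}$ is unramified and $\pi_\ell$ has unramified central character. Up to an unramified twist, $\rho_f|_{\op{G}_\ell}$ is then either a sum $\mu_1\oplus\mu_2$ of characters of $\op{G}_\ell$, possibly carrying a nonzero monodromy operator in the special (Steinberg-twist) case, or an irreducible two-dimensional representation, in which case it is the induction $\op{Ind}_{\op{G}_L}^{\op{G}_\ell}\theta$ of a character $\theta$ from the unramified, or from a ramified, quadratic extension $L/\Q_\ell$. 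The decisive observation is \emph{tameness}: $1+\ell\Z_\ell$ is pro-$\ell$ and maps trivially into the pro-$p$ part of $\cO^\times$, so each character $\mu$ above has $c_\ell(\mu)\leq 1$ with $\mu|_{\op{I}_\ell}$ factoring through $\F_\ell^\times$, and $\bar\mu$ is unramified exactly when $\mu|_{\op{I}_\ell}$ has $p$-power order, which requires $p\mid\ell-1$; the same holds for $\theta$ with $\F_\ell^\times$ replaced by the multiplicative group of the residue field of $L$ (namely $\F_{\ell^2}$ if $L$ is unramified, $\F_\ell$ if $L$ is ramified), with the caveat that the induction from a ramified $L$ remains ramified regardless of $\theta$.

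It then remains to match these local possibilities against the list $(1)$--$(3)$, tracking the residual trace and determinant at Frobenius. For example, $\alpha(\ell)=2$ forces $\ell\nmid N_{\bar\rho}$, $\op{Sw}_\ell=0$ and $c_\ell^{\mathrm t}(\rho_f)=2$, which by the previous paragraph means $\rho_f|_{\op{G}_\ell}$ is either $\mu_1\oplus\mu_2$ with both $\mu_i$ tame of $p$-power order, forcing $\ell\equiv 1\pmod p$ and giving $(3)(a)$; or the induction from the unramified quadratic extension of a tame $\theta$ whose inertial restriction has $p$-power order but is not defined over $\F_\ell$, forcing $\ell\equiv-1\pmod p$ and $\tr\bar\rho(\sigma_\ell)=0$ (as $\bar\rho(\sigma_\ell)$ still interchanges the two eigenlines), giving $(2)(a)$. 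The case $\alpha(\ell)=1$ is parallel: the residual representation loses one unit of tame conductor, the Steinberg subcase producing the level-raising congruence $\ell(\tr\bar\rho(\sigma_\ell))^2=(1+\ell)^2\det\bar\rho(\sigma_\ell)$ of $(1)$, and the remaining subcases distributing $\ell$ between the residue classes $\ell\equiv\pm1\pmod p$ to yield $(2)(b)$ and $(3)(b)$, where the unramifiedness of $\det\bar\rho$ demanded in $(2)(b)$ is automatic since $\det\bar\rho=\bar\chi$. I expect this last step to be the main obstacle: converting ``the residual representation drops a unit of conductor'' into the explicit trace and determinant congruences, and in particular determining, in the Steinberg subcase, exactly when the nonsplit local extension class survives reduction modulo $\varpi$, is the real content of Carayol's argument. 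A lesser but genuine technical point is to make precise the compatibility of reduction modulo $\varpi$ with the break decomposition used in the second paragraph.
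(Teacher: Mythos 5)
You should first note that the paper contains no proof of this statement at all: Theorem \ref{carayol thm} is quoted from Carayol \cite{carayol1989representations} (and is recalled in this form by Diamond--Taylor), so the only meaningful benchmark is Carayol's original local analysis. Within your outline, the first half is essentially sound and standard: since $\rho_f(P_\ell)$ is finite of order prime to $p$, reduction modulo $\varpi$ preserves the restriction to wild inertia, hence the Swan conductors of $\rho_f$ and $\bar{\rho}$ at $\ell$ agree, and the saturation argument giving $\op{rk}_{\cO} T_f^{\op{I}_\ell} \le \dim_\kappa \bar{\rho}^{\op{I}_\ell}$ yields $N_{\bar{\rho}} \mid N_f$ with $0 \le \alpha(\ell) \le 2$. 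That much recovers the factorization claim.

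However, there are two genuine gaps. First, your ``decisive observation of tameness'' is false as stated: a continuous character $\mu : \op{G}_\ell \to \cO^\times$ (or the character $\theta$ of a quadratic extension) need not satisfy $c_\ell(\mu) \le 1$, because wild inertia, though pro-$\ell$, can map nontrivially onto the prime-to-$p$ roots of unity of $\cO^\times$ when $\ell$ divides $\#\kappa - 1$; ramified principal-series and supercuspidal parameters with arbitrarily large wild conductor do occur for weight-two forms. Only the \emph{difference} between $\rho_f|_{\op{I}_\ell}$ and $\bar{\rho}|_{\op{I}_\ell}$ is tame (by your own Swan argument), so the case analysis must be organized around that difference rather than around tameness of the local parameter; relatedly, for $\ell = 2$ there exist supercuspidal components not induced from a quadratic extension, so your trichotomy is incomplete. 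Second, and more seriously, the actual content of the theorem --- deriving the precise list (1), (2)(a)--(b), (3)(a)--(b), e.g.\ that the Steinberg-type case with $\bar{\rho}$ unramified at $\ell$ forces $\ell(\tr\bar{\rho}(\sigma_\ell))^2 = (1+\ell)^2\det\bar{\rho}(\sigma_\ell)$, and that a drop of one or two units of tame conductor forces the stated congruences $\ell \equiv \pm 1 \pmod{p}$ with the accompanying trace and ramification conditions --- is exactly the step you defer, as you yourself acknowledge (``the real content of Carayol's argument''). As written, the proposal is a plausible reduction plus an admission that the decisive classification is missing, so it does not constitute a proof of the statement.
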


\begin{definition}
The set of levels $N_f$ satisfying the conditions in Theorem \ref{carayol thm} is denoted by $\mathcal{S}(\bar{\rho})$. For $X>0$, set $n(\bar{\rho}; X):=\#\{N_f\in \mathcal{S}(\bar{\rho})\mid N_f\leq X\}$.
\end{definition}
Diamond and Taylor proved a striking \emph{level raising} theorem for mod \( p \) Galois representations, which provides a converse to Theorem~\ref{carayol thm}.

\begin{theorem}[Diamond–Taylor \cite{diamond1994non}]\label{DT theorem}
Let $p \geq 5$ be a prime, and let
\[
\bar{\rho} : \mathrm{Gal}(\overline{\Q}/\Q) \to \mathrm{GL}_2(\bar{\F}_p)
\]
be an irreducible Galois representation. Suppose $\bar{\rho}$ arises from a newform $g$ on $\Gamma_1(N_{\bar{\rho}})$ of weight $k$ with $2 \leq k \leq p-2$, and let $M \in \mathcal{S}(\bar{\rho})$. Then there exists a cuspidal newform $f$ of weight $k$ and level $M$ such that $\bar{\rho}_f \simeq \bar{\rho}$.
\end{theorem}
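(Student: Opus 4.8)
The plan is to reduce the statement to raising the level at one prime at a time, and then to apply the level-raising method of Ribet in the form developed by Diamond and Taylor, which for the cases at hand proceeds through the cohomology of modular and Shimura curves together with Ihara's lemma. Write $M=N_{\bar\rho}\prod_\ell\ell^{\alpha(\ell)}$ as permitted by Theorem~\ref{carayol thm}. It then suffices to treat the single step: if $\bar\rho$ arises from a newform of weight $k$ and level $N\in\cS(\bar\rho)$, and $N'\in\cS(\bar\rho)$ agrees with $N$ away from one prime $\ell'$ with $v_{\ell'}(N')>v_{\ell'}(N)$, then $\bar\rho$ arises from a newform of weight $k$ and level $N'$ that is new at $\ell'$. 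Induction on the number of prime factors of $M/N_{\bar\rho}$ counted with multiplicity, starting from the optimal-level newform $g$, then yields the theorem.

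\emph{Passage to a quaternion algebra.} For the single step one works not on $\op{GL}_2$ but on the multiplicative group of a suitably chosen quaternion algebra $B/\Q$: one picks an auxiliary prime $r$ (and, if necessary, a second one) so that $B$, ramified at a finite set of primes containing $r$, still detects $\bar\rho$ through the Jacquet--Langlands correspondence, and so that the relevant space of automorphic forms for $B^\times$ becomes either the space of $\bar\F_p$-valued functions on a finite Shimura set (definite case) or the first \'etale cohomology with weight-$k$ coefficients of a Shimura curve (indefinite case). The advantage is that, on the quaternionic side, passing from level maximal at $\ell'$ to level with Iwahori --- or, when the conductor at $\ell'$ jumps by $2$, deeper --- structure at $\ell'$ is governed by an explicit degeneracy map, and Jacquet--Langlands matches the resulting systems of Hecke eigenvalues with those on $\op{GL}_2$.

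\emph{Ihara's lemma and the local analysis at $\ell'$.} Localize the relevant Hecke module at the maximal ideal $\mathfrak{m}=\mathfrak{m}_{\bar\rho}$, which is non-Eisenstein since $\bar\rho$ is irreducible. Two ingredients are needed. First, Ihara's lemma --- in the form proved by Diamond--Taylor for these Shimura curves, with its elementary analogue on Shimura sets --- shows that the sum of the degeneracy maps from level $N$ to level $N'$ remains injective after localizing at $\mathfrak{m}$; dualizing, the ``old at $\ell'$'' submodule has exactly the expected rank, so the ``new at $\ell'$'' quotient has the complementary rank. Second, local--global compatibility together with the Eichler--Shimura congruence relation determine the action of $T_{\ell'}$ and $U_{\ell'}$ on the mod $p$ cohomology in terms of $\bar\rho|_{\op{G}_{\ell'}}$; the numerical conditions of Theorem~\ref{carayol thm} are precisely the congruences (such as $\ell'(\tr\bar\rho(\sigma_{\ell'}))^2=(1+\ell')^2\det\bar\rho(\sigma_{\ell'})$, or $\ell'\equiv\pm1\pmod p$ with $\tr\bar\rho(\sigma_{\ell'})\equiv 0$, and so on) under which a mod $p$ eigenclass new at $\ell'$, on which $U_{\ell'}$ acts with the Steinberg or ramified-principal-series eigenvalue forced by $\bar\rho|_{\op{G}_{\ell'}}$, can exist. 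Combining the two, the $\mathfrak{m}$-localized new-at-$\ell'$ space is nonzero exactly when the relevant condition at $\ell'$ holds; lifting such a mod $p$ eigenclass to characteristic zero by a Deligne--Serre-type argument --- valid because the localized Hecke algebra acts faithfully on a module that is free over the coefficient ring --- produces the required newform of level $N'$.

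\emph{Main obstacle.} The technical heart is the correct form of Ihara's lemma for the Shimura curves carrying Iwahori or deeper level at $\ell'$, together with the precise structure of the mod $p$ local Hecke module at $\ell'$ when $\ell'\equiv\pm1\pmod p$ and the conductor at $\ell'$ jumps by $2$: there the $\ell'$-local representation is a twist of Steinberg by a ramified quadratic character, or is supercuspidal, and one must verify both that the degeneracy and new-subspace computations survive localization at $\mathfrak{m}$ and that the new subspace carries an eigenclass. The hypotheses $p\ge 5$ and $2\le k\le p-2$ are used to keep the weight-$k$ mod $p$ coefficient system within the Fontaine--Laffaille range, so the cohomology behaves as in the classical case and $\bar\rho$ does not degenerate on the quaternionic side, while the condition $\ell'\nmid p$ at each step makes the degeneracy maps and Eichler--Shimura relations at $\ell'$ the classical ones.
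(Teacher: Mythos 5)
This statement is not proved in the paper at all --- it is imported verbatim from Diamond--Taylor \cite{diamond1994non} --- and your outline is an accurate summary of the strategy of that cited proof: induction one prime at a time through $\cS(\bar{\rho})$, transfer via Jacquet--Langlands to Shimura curves or Shimura sets for an auxiliary quaternion algebra, Ihara's lemma with weight-$k$ coefficients (where $p\geq 5$ and $2\leq k\leq p-2$ keep the coefficient system in the Fontaine--Laffaille range), degeneracy maps localized at the non-Eisenstein maximal ideal attached to $\bar{\rho}$, and a lift of the resulting mod-$p$ eigenclass to characteristic zero. So your proposal takes essentially the same route as the source on which the paper relies, with the hard technical content (the Shimura-curve Ihara lemma and the local analysis at primes $\ell\equiv\pm1\pmod p$) correctly identified but, as you acknowledge, not carried out.
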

Carayol's theorem gives necessary local conditions on the level, Diamond and Taylor’s theorem shows that these conditions are also sufficient, thereby completing the characterization of levels at which \( \bar{\rho} \) arises.

\subsection{Selmer groups}
\par  Associated to a Hecke eigenform \( f \), there are two natural Selmer groups attached to its \( p \)-adic Galois representation. The first is defined using Greenberg’s local condition at \( p \), together with the usual unramified conditions away from \( p \). This Selmer group is defined over $\Q$, and will differ from its Iwasawa theoretic counterpart, which is defined over the cyclotomic $\Z_p$-extension of $\Q$. We will refer to this as the \emph{\( \mathbb{Q} \)-Greenberg Selmer group} of \( f \). The second is the \emph{Bloch--Kato Selmer group}, which is defined via the framework of \( p \)-adic Hodge theory.

\par Since $f$ is $p$-ordinary, $V_f$ sits in a short exact sequence of $\cK[\op{G}_p]$-modules, 
\[0\rightarrow V_f^+\xrightarrow{\iota} V_f \xrightarrow{\pi} V_f^-\rightarrow 0, \] where $V_f^\pm$ are one-dimensional as $\cK$ vector spaces. The action of $\op{I}_p$ on $V_f^+$ is via $\chi_p^{k-1}$, where $k$ is the weight of $f$. On the other hand, the $\op{G}_p$ action on the quotient $V_f^-$ is unramified. Setting 
\[T_f^+:=T_f\cap V_f^+\text{ and } T_f^-:=\pi(T_f)\]
one arrives at an exact sequence of $\cO[\op{G}_p]$-modules
\[0\rightarrow T_f^+\rightarrow T_f\rightarrow T_f^-\rightarrow 0.\]
Likewise, we set $A_f^{\pm}:=V_f^\pm/T_f^\pm$ and obtain the exact sequence
\[0\rightarrow A_f^+\rightarrow A_f \rightarrow A_f^-\rightarrow 0.\]
Given a rational prime $\ell\neq p$, define $H^1_{\op{Gr}}(\Q_\ell, A_f):=H^1_{\op{nr}}(\Q_\ell, A_f)$. Define 
\[H^1_{\op{Gr}}(\Q_p, A_f):=\op{ker}\left\{ H^1(\Q_p, A_f)\rightarrow H^1(\op{I}_p, A_f^-)\right\}.\]
\begin{definition}[Greenberg Selmer group over $\Q$]
    With respect to notation above, the Greenberg Selmer group associated to the $p$-divisible module $A_f$ is defined as follows:
    \[\op{Sel}_{\op{Gr}}(A_f/\Q):=\op{ker}\left\{ H^1(\Q, A_f)\rightarrow \bigoplus_\ell \frac{H^1(\Q_\ell, A_f)}{H^1_{\op{Gr}}(\Q_\ell, A_f)} \right\}.\]
\end{definition}
\begin{remark}
Another natural Selmer group to consider was introduced by Bloch and Kato \cite{bloch1990functions}:
\[
    \Sel_{\op{BK}}(A_f/\Q)
    := \ker\!\left\{
        H^1(\Q, A_f)
        \longrightarrow
        \prod_{\ell}
        \frac{H^1(\Q_\ell, A_f)}{H^1_f(\Q_\ell, A_f)}
    \right\},
\]
where the local conditions $H^1_f(\Q_\ell, A_f)$ differ from the conditions $H^1_{\op{Gr}}(\Q_\ell, A_f)$ used to define the Greenberg Selmer group except at finitely many primes. One has a natural inclusion
\[
    \Sel_{\op{BK}}(A_f/\Q) \subseteq \Sel_{\Gr}(A_f/\Q),
\]
and it is known that if $A_f$ is $p$-crystalline, then $\Sel_{\Gr}(A_f/\Q)$ is a finite-index subgroup of $\Sel_{\op{BK}}(A_f/\Q)$ (see \cite[Proposition~4.1(1)]{ochiai2000control}, \cite[(3.1)]{LongoVigni}, and \cite[Theorem~3]{flach}). Consequently, when the Greenberg Selmer group vanishes, so does the Bloch--Kato Selmer group.
\end{remark}
\subsection{The residual Selmer group}

Let $A_f[\varpi]$ denote the residual representation, which may also be identified with $T_f / \varpi T_f$. Define a Greenberg Selmer group associated to $A_f[\varpi]$ as follows. For $\ell \neq p$, set  
\[
H^1_{\operatorname{Gr}}(\Q_\ell, A_f[\varpi]) := H^1_{\operatorname{nr}}(\Q_\ell, A_f[\varpi]),
\]
and at $p$, define  
\[
H^1_{\operatorname{Gr}}(\Q_p, A_f[\varpi]) := \ker\left\{ H^1(\Q_p, A_f[\varpi]) \longrightarrow H^1(\operatorname{I}_p, A_f^-[\varpi]) \right\}.
\]
The inclusion $A_f[\varpi] \hookrightarrow A_f$ induces a map of Selmer groups  
\[
\alpha \colon \operatorname{Sel}_{\operatorname{Gr}}(A_f[\varpi]/\Q) \longrightarrow \operatorname{Sel}_{\operatorname{Gr}}(A_f/\Q)[\varpi],
\]
which fits into the commutative diagram  
\begin{equation}\label{BK exact diagram}
\begin{tikzcd}[column sep = small, row sep = large]
0 \arrow{r} & \operatorname{Sel}_{\operatorname{Gr}}(A_f[\varpi]/\Q) \arrow{r} \arrow{d}{\alpha} & H^1(\Q, A_f[\varpi]) \arrow{r}{\Phi} \arrow{d}{g} & \bigoplus_{\ell} \frac{H^1(\Q_\ell, A_f[\varpi])}{H^1_{\operatorname{Gr}}(\Q_\ell, A_f[\varpi])} \arrow{d}{h} \\
0 \arrow{r} & \operatorname{Sel}_{\operatorname{Gr}}(A_f/\Q)[\varpi] \arrow{r} & H^1(\Q, A_f)[\varpi] \arrow{r} & \bigoplus_{\ell} \frac{H^1(\Q_\ell, A_f)[\varpi]}{H^1_{\operatorname{Gr}}(\Q_\ell, A_f)[\varpi]}.
\end{tikzcd}
\end{equation}
\noindent The map $h$ decomposes into a direct sum $h=\bigoplus_\ell h_\ell$, where
\[h_\ell:\frac{H^1(\Q_\ell, A_f[\varpi])}{H^1_{\operatorname{Gr}}(\Q_\ell, A_f[\varpi])}\longrightarrow \frac{H^1(\Q_\ell, A_f)[\varpi]}{H^1_{\operatorname{Gr}}(\Q_\ell, A_f)[\varpi]}.\]
We set $\beta_\ell(A_f):=\dim_{\kappa}\op{ker}h_\ell$.

\begin{lemma}\label{beta local lemma}
    The following assertions hold:
    \begin{enumerate}
        \item Suppose $\ell\neq p$, then there is a natural injection\begin{equation}\label{natural inj}\op{ker}h_\ell\hookrightarrow \left(\frac{H^0(\op{I}_\ell, A_f)}{\varpi H^0(\op{I}_\ell, A_f)}\right)^{\op{G}_\ell/\op{I}_\ell}.\end{equation}
        \item Let $\ell\neq p$ and suppose that $\ell\nmid N_{\bar{\rho}}$ (equivalently, $\bar{\rho}$ is unramified at $\ell$). Then, $\beta_\ell(A_f)=0$.
        \item One has that $\beta_p(A_f)=0$.
    \end{enumerate}
\end{lemma}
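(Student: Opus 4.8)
The three parts are closely linked, with (1) carrying the technical weight, and I would treat them in that order.

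For (1) the plan is to compute $\ker h_\ell$ at the level of inertia. For any $\op{G}_\ell$-module $M$ the inflation--restriction sequence reads
\[
0\to H^1(\op{G}_\ell/\op{I}_\ell,M^{\op{I}_\ell})\to H^1(\Q_\ell,M)\to H^1(\op{I}_\ell,M)^{\op{G}_\ell/\op{I}_\ell}\to H^2(\op{G}_\ell/\op{I}_\ell,M^{\op{I}_\ell}),
\]
and since $\op{G}_\ell/\op{I}_\ell\cong\widehat{\Z}$ has cohomological dimension $1$ the last term vanishes --- for $M$ finite because $\op{cd}(\widehat{\Z})=1$, and for $M=A_f$ by passing to the direct limit over the finite submodules $A_f[\varpi^n]$. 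Hence for $M=A_f[\varpi]$ and $M=A_f$ one gets natural isomorphisms $H^1(\Q_\ell,M)/H^1_{\op{nr}}(\Q_\ell,M)\xrightarrow{\ \sim\ }H^1(\op{I}_\ell,M)^{\op{G}_\ell/\op{I}_\ell}$, while the natural map $H^1(\Q_\ell,A_f)[\varpi]/H^1_{\op{nr}}(\Q_\ell,A_f)[\varpi]\to H^1(\Q_\ell,A_f)/H^1_{\op{nr}}(\Q_\ell,A_f)$ is injective. Chasing definitions, under these identifications $h_\ell$ becomes the restriction to $\op{G}_\ell/\op{I}_\ell$-invariants of the map $\iota_\ast\colon H^1(\op{I}_\ell,A_f[\varpi])\to H^1(\op{I}_\ell,A_f)$ induced by $A_f[\varpi]\hookrightarrow A_f$, so $\ker h_\ell=(\ker\iota_\ast)^{\op{G}_\ell/\op{I}_\ell}$. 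Finally, the long exact $\op{I}_\ell$-cohomology sequence of $0\to A_f[\varpi]\to A_f\xrightarrow{\ \varpi\ }A_f\to 0$ identifies $\ker\iota_\ast$ with the image of the connecting homomorphism $H^0(\op{I}_\ell,A_f)\to H^1(\op{I}_\ell,A_f[\varpi])$, i.e.\ with $H^0(\op{I}_\ell,A_f)/\varpi H^0(\op{I}_\ell,A_f)$ as a $\op{G}_\ell/\op{I}_\ell$-module; passing to invariants yields \eqref{natural inj} (in fact an isomorphism).

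For (2): by \eqref{natural inj} it suffices to show $H^0(\op{I}_\ell,A_f)/\varpi H^0(\op{I}_\ell,A_f)=0$. Since $\ell\nmid N_{\bar\rho}$ and $f$ is of optimal level (so $N_f=N_{\bar\rho}$), we have $\ell\nmid N_f$; together with $\ell\neq p$ this makes $\rho_f$, and hence $A_f$, unramified at $\ell$, so $H^0(\op{I}_\ell,A_f)=A_f\cong(\cK/\cO)^2$, which is $\varpi$-divisible. Thus the quotient vanishes and $\beta_\ell(A_f)=0$. For (3) the argument is formally identical, with the unramified quotient $A_f^-$ replacing $A_f$: by definition of the Greenberg condition at $p$, both $H^1(\Q_p,A_f[\varpi])/H^1_{\op{Gr}}(\Q_p,A_f[\varpi])$ and $H^1(\Q_p,A_f)[\varpi]/H^1_{\op{Gr}}(\Q_p,A_f)[\varpi]$ inject into $H^1(\op{I}_p,A_f^-[\varpi])$ and $H^1(\op{I}_p,A_f^-)$ respectively, compatibly with the map induced by $A_f^-[\varpi]\hookrightarrow A_f^-$, so the same diagram chase shows $\ker h_p$ injects into $H^0(\op{I}_p,A_f^-)/\varpi H^0(\op{I}_p,A_f^-)$ via the connecting map of $0\to A_f^-[\varpi]\to A_f^-\xrightarrow{\ \varpi\ }A_f^-\to 0$. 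Since $f$ is $p$-ordinary, the $\op{G}_p$-action on $A_f^-=V_f^-/T_f^-\cong\cK/\cO$ is unramified, so $H^0(\op{I}_p,A_f^-)=A_f^-$ is $\varpi$-divisible and this group is $0$; hence $\beta_p(A_f)=0$.

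The main obstacle is (1). The delicate points are: checking that the inflation--restriction sequence degenerates for the infinite coefficient module $A_f$ (the limit argument), verifying that the connecting homomorphism is $\op{G}_\ell/\op{I}_\ell$-equivariant so that passage to invariants is legitimate, and --- the most error-prone step --- confirming that $h_\ell$ really does match $\iota_\ast$ after quotienting, which requires keeping careful track of the distinction between taking $\varpi$-torsion before versus after dividing out by the unramified subgroup. Once (1) is established, parts (2) and (3) are immediate, the only input being the $\varpi$-divisibility of $A_f$ at $\ell$ (resp.\ of $A_f^-$ at $p$) coming from unramifiedness.
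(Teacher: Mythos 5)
Your proof is correct and follows essentially the same route as the paper: reduce $h_\ell$ (and $h_p$) to the induced map on inertia cohomology via the restriction maps, and identify the kernel there with $H^0(\op{I}_\ell, A_f)/\varpi H^0(\op{I}_\ell, A_f)$ (resp.\ the analogous quotient for $A_f^-$) using the long exact sequence for multiplication by $\varpi$, then invoke $\varpi$-divisibility of the unramified module for parts (2) and (3). The only difference is cosmetic: you upgrade the injection \eqref{natural inj} to an isomorphism via $\op{cd}(\widehat{\Z})=1$, which the paper neither claims nor needs.
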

\begin{proof}
 Let us begin by proving part (1). Consider the commutative diagram
\[
\begin{array}{ccc}
\frac{H^1(\Q_\ell, A_f[\varpi])}{H^1_{\operatorname{nr}}(\Q_\ell, A_f[\varpi])} & \xrightarrow{h_\ell} & \frac{H^1(\Q_\ell, A_f)[\varpi]}{H^1_{\operatorname{nr}}(\Q_\ell, A_f)[\varpi]} \\
\downarrow^{} &  & \downarrow^{} \\
H^1(\op{I}_\ell, A_f[\varpi])^{\op{G}_\ell/\op{I}_\ell} & \xrightarrow{g_\ell} & H^1(\op{I}_\ell, A_f)^{\op{G}_\ell/\op{I}_\ell}[\varpi].
\end{array}
\]
The vertical maps are injective, and therefore, $\op{ker}h_\ell$ injects into $\op{ker} g_\ell$. From the Kummer sequence, 
\[\op{ker}g_\ell\simeq \left(\frac{H^0(\op{I}_\ell, A_f)}{\varpi H^0(\op{I}_\ell, A_f)}\right)^{\op{G}_\ell/\op{I}_\ell},\] from which the result follows.

\par Next, we prove (2). Since the action of $\op{I}_\ell$ on $A_f$ is trivial, we find that $H^0(\op{I}_\ell, A_f)=A_f$. Since $A_f$ is $\varpi$-divisible, $A_f/\varpi A_f=0$ and thus part (2) follows from (1).

\par In order to prove part (3), it suffices to show that the map 
\[H^1(\op{I}_p, A_f^-[\varpi])\rightarrow H^1(\op{I}_p, A_f^-)\] is injective. By the Kummer sequence, its kernel is isomorphic to $\frac{H^0(\op{I}_p, A_f^-)}{\varpi H^0(\op{I}_p, A_f^-)}$. Since the action of $\op{I}_p$ on $A_f^-$ is trivial, and $A_f^-$ is $\varpi$-divisible, we deduce that $h_p$ is injective. Consequently $\beta_p(A_f)=0$, proving (3).\end{proof}

\begin{proposition}\label{lower bound propn}
With the notation as above, one has
\[
\dim_{\kappa} \operatorname{Sel}_{\operatorname{Gr}}(A_f/\mathbb{Q})[\varpi] \leq \dim_{\kappa} \operatorname{Sel}_{\operatorname{Gr}}(A_f[\varpi]/\mathbb{Q}) + \sum_{\ell} \beta_\ell(A_f).
\]
\end{proposition}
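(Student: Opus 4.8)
The plan is to extract the estimate from the commutative diagram \eqref{BK exact diagram} by a snake-lemma argument. First I would truncate the two rows into short exact sequences: let $B_1$ be the image of $\Phi$ inside $\bigoplus_\ell \frac{H^1(\Q_\ell, A_f[\varpi])}{H^1_{\operatorname{Gr}}(\Q_\ell, A_f[\varpi])}$ and let $B_2$ be the image of the lower horizontal arrow inside $\bigoplus_\ell \frac{H^1(\Q_\ell, A_f)[\varpi]}{H^1_{\operatorname{Gr}}(\Q_\ell, A_f)[\varpi]}$. Commutativity of \eqref{BK exact diagram} gives $h(B_1) \subseteq B_2$, so the vertical maps $\alpha$, $g$ and $h|_{B_1}$ fit into a morphism between the short exact sequences
\[
0 \to \operatorname{Sel}_{\operatorname{Gr}}(A_f[\varpi]/\Q) \to H^1(\Q, A_f[\varpi]) \xrightarrow{\Phi} B_1 \to 0
\quad\text{and}\quad
0 \to \operatorname{Sel}_{\operatorname{Gr}}(A_f/\Q)[\varpi] \to H^1(\Q, A_f)[\varpi] \to B_2 \to 0 ,
\]
the first being exact because $\operatorname{Sel}_{\operatorname{Gr}}(A_f[\varpi]/\Q) = \ker \Phi$ by definition, and similarly for the second.

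The only non-formal input is the claim that the middle vertical map $g$ is an isomorphism. I would get this from the Kummer sequence $0 \to A_f[\varpi] \to A_f \xrightarrow{\varpi} A_f \to 0$: the associated long exact cohomology sequence identifies $g$ as the connecting map, surjective onto $H^1(\Q, A_f)[\varpi]$ with kernel $H^0(\Q, A_f)/\varpi H^0(\Q, A_f)$. By Assumption~(ii) the residual representation $A_f[\varpi] \simeq \bar\rho$ is absolutely irreducible, hence has no nonzero $\operatorname{G}_\Q$-invariants; since $A_f$ is $\varpi$-divisible and cofinitely generated over $\cO$, a nonzero element of $H^0(\Q, A_f)$ would force a nonzero element of $H^0(\Q, A_f[\varpi])$, so $H^0(\Q, A_f) = 0$ and $g$ is an isomorphism. (For the final inequality only surjectivity of $g$ is strictly needed, so this step is not a serious obstacle — it is the single place where the hypotheses on $\bar\rho$ are used.)

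With $g$ an isomorphism, the snake lemma applied to the two short exact sequences above yields $\ker\alpha = 0$ together with an isomorphism $\operatorname{coker}(\alpha) \cong \ker(h|_{B_1})$, and $\ker(h|_{B_1}) = B_1 \cap \ker h$ is a $\kappa$-subspace of $\ker h = \bigoplus_\ell \ker h_\ell$. Therefore
\[
\dim_\kappa \operatorname{coker}(\alpha) \ \leq\ \dim_\kappa \ker h \ =\ \sum_\ell \dim_\kappa \ker h_\ell \ =\ \sum_\ell \beta_\ell(A),
\]
the sum being finite by Lemma~\ref{beta local lemma}. Finally, since $\alpha$ is injective, the exact sequence $0 \to \operatorname{Sel}_{\operatorname{Gr}}(A_f[\varpi]/\Q) \xrightarrow{\alpha} \operatorname{Sel}_{\operatorname{Gr}}(A_f/\Q)[\varpi] \to \operatorname{coker}(\alpha) \to 0$ gives
\[
\dim_\kappa \operatorname{Sel}_{\operatorname{Gr}}(A_f/\Q)[\varpi] = \dim_\kappa \operatorname{Sel}_{\operatorname{Gr}}(A_f[\varpi]/\Q) + \dim_\kappa \operatorname{coker}(\alpha),
\]
and combining the last two displays proves the proposition. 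The argument is essentially formal; I expect the bookkeeping around the non-surjectivity of $\Phi$ (handled by passing to $B_1$, $B_2$) to be the only place demanding care.
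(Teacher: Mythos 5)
Your argument is correct and essentially the same as the paper's: the paper likewise restricts $h$ to the image of $\Phi$, uses the irreducibility of $A_f[\varpi]$ together with the Kummer sequence to see that $g$ is an isomorphism, and applies the snake lemma to obtain the left-exact sequence $0 \to \operatorname{Sel}_{\operatorname{Gr}}(A_f[\varpi]/\mathbb{Q}) \to \operatorname{Sel}_{\operatorname{Gr}}(A_f/\mathbb{Q})[\varpi] \to \bigoplus_\ell \ker h_\ell$, from which the inequality follows. The only quibble is terminological: $g$ is not the connecting map but the map induced by the inclusion $A_f[\varpi]\hookrightarrow A_f$; its kernel $H^0(\mathbb{Q},A_f)/\varpi H^0(\mathbb{Q},A_f)$ and its surjectivity onto $H^1(\mathbb{Q},A_f)[\varpi]$ are read off the Kummer long exact sequence exactly as you state, so nothing in the proof is affected.
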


\begin{proof}
In \eqref{BK exact diagram}, let \( h' \) denote the restriction of \( h \) to the image of \( \Phi \). Applying the snake lemma yields an exact sequence
\[
0 \longrightarrow \ker \alpha \longrightarrow \ker g \longrightarrow \ker h' \longrightarrow \operatorname{coker} \alpha \longrightarrow \operatorname{coker} g.
\]
Since \( A_f[\varpi] \) is irreducible as a \( \operatorname{G}_{\mathbb{Q}} \)-module, it follows that \( H^0(\mathbb{Q}, A_f[\varpi]) = 0 \). Thus, the Kummer sequence shows that \( g \) is an isomorphism. In particular, \( \alpha \) is injective and \( \operatorname{coker} \alpha \) injects into \( \ker h \).

There is therefore a left exact sequence
\begin{equation}\label{selmer left exact}
0 \longrightarrow \operatorname{Sel}_{\operatorname{Gr}}(A_f[\varpi]/\mathbb{Q}) \longrightarrow \operatorname{Sel}_{\operatorname{Gr}}(A_f/\mathbb{Q})[\varpi] \longrightarrow \bigoplus_{\ell} \ker h_\ell,
\end{equation}
from which the claimed inequality follows.
\end{proof}
\par A similar strategy yields lower bounds for the dimension of $\operatorname{Sel}_{\operatorname{Gr}}(A_f/\mathbb{Q})[\varpi]$ over $\kappa$. Let $S$ denote the set of primes dividing $N_f p$. For each $\ell \in S$, define the local condition $\cL_\ell := H^1_{\operatorname{Gr}}(\mathbb{Q}_\ell, A_f[\varpi])$, so that the associated Selmer group is given by

$$
H^1_{\cL}(\mathbb{Q}_S/\mathbb{Q}, A_f[\varpi]) = \operatorname{Sel}_{\operatorname{Gr}}(A_f[\varpi]/\mathbb{Q}).
$$

One defines the dual Selmer group by setting

$$
\operatorname{Sel}_{\operatorname{Gr}, \perp}(A_f[\varpi]^*/\mathbb{Q}) := H^1_{\cL^\perp}(\mathbb{Q}_S/\mathbb{Q}, A_f[\varpi]^*),
$$

as in \eqref{dselmergp}. While the result that follows is not used in the proof of the main theorem, it is of independent interest.

\begin{lemma}
    With respect to the above notation, 
    \[\begin{split}&\dim_{\kappa}\operatorname{Sel}_{\operatorname{Gr}}(A_f/\mathbb{Q})[\varpi]\geq \sum_{\ell\in S} \beta_\ell(A_f)
    + \dim_{\kappa} H^1_{\op{Gr}}(\Q_p, A_f[\varpi]) \\ -&\dim_{\kappa} H^0(\Q_p, A_f[\varpi])- \dim_{\kappa} H^0(\Q_\infty,  A_f[\varpi])+\dim_{\kappa} \Sh_S^2(A_f[\varpi]).\end{split}\]
\end{lemma}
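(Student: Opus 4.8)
The plan is to push the snake-lemma argument from the proof of Proposition~\ref{lower bound propn} one step further, upgrading the left-exact sequence \eqref{selmer left exact} to an exact one, and then to balance the resulting quantities against Wiles' formula and the Poitou--Tate sequence \eqref{PT sequence}. Write $M:=A_f[\varpi]$ and $M^*:=\op{Hom}(M,\mu_p)$. Since $M$ is absolutely irreducible, so is $M^*$ (indeed $M^*\simeq M$ because $\det\bar{\rho}_f=\bar{\chi}$), and therefore $H^0(\Q,M)=H^0(\Q,M^*)=0$. Because $M$ and $A_f$ are unramified outside $S$ and the Greenberg condition at every prime $\ell\notin S$ is the unramified one, the diagram \eqref{BK exact diagram} may be rewritten with $H^1(\Q,-)$ replaced by $H^1(\Q_S/\Q,-)$ and all direct sums taken over $\ell\in S$; the top horizontal map then becomes the map $\Phi$ of \eqref{PT sequence}, with $\ker\Phi=H^1_{\cL}(\Q_S/\Q,M)=\op{Sel}_{\op{Gr}}(A_f[\varpi]/\Q)$. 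Exactly as in the proof of Proposition~\ref{lower bound propn}, $g$ is an isomorphism (the Kummer sequence together with $H^0(\Q,A_f[\varpi])=0$), so the snake lemma yields $\ker\alpha=0$ and a canonical isomorphism $\op{coker}\alpha\simeq\ker h'$, where $h'$ is the restriction of $h=\bigoplus_{\ell\in S}h_\ell$ to $\op{Im}\Phi$. Consequently
\[
\dim_\kappa\op{Sel}_{\op{Gr}}(A_f/\Q)[\varpi]=\dim_\kappa\op{Sel}_{\op{Gr}}(A_f[\varpi]/\Q)+\dim_\kappa\ker h'.
\]

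Next I would bound $\dim_\kappa\ker h'$ from below. Since $\ker h'=\ker h\cap\op{Im}\Phi$ and $\ker h=\bigoplus_{\ell\in S}\ker h_\ell$ lies inside the finite-dimensional space $\bigoplus_{\ell\in S}H^1(\Q_\ell,M)/\cL_\ell$, the elementary bound $\dim_\kappa(U\cap W)\geq\dim_\kappa U+\dim_\kappa W-\dim_\kappa V$ for subspaces $U,W$ of a finite-dimensional $V$ gives
\[
\dim_\kappa\ker h'\;\geq\;\sum_{\ell\in S}\dim_\kappa\ker h_\ell\;-\;\dim_\kappa\op{coker}\Phi.
\]
The cokernel of $\Phi$ I would read off from \eqref{PT sequence}: exactness there identifies $\op{Im}\Phi$ with the kernel of the map $\bigoplus_{\ell\in S}H^1(\Q_\ell,M)/\cL_\ell\to H^1_{\cL^\perp}(\Q_S/\Q,M^*)^\vee$, whose image is in turn the kernel of the final surjection onto $\Sh_S^2(M)$. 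Hence $\op{coker}\Phi$ is isomorphic to that kernel, so that
\[
\dim_\kappa\op{coker}\Phi=\dim_\kappa\op{Sel}_{\op{Gr}, \perp}(A_f[\varpi]^*/\Q)-\dim_\kappa\Sh_S^2(A_f[\varpi]).
\]

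Finally I would eliminate the two Selmer dimensions $\dim_\kappa\op{Sel}_{\op{Gr}}(A_f[\varpi]/\Q)$ and $\dim_\kappa\op{Sel}_{\op{Gr}, \perp}(A_f[\varpi]^*/\Q)$ via Wiles' formula, which — using $H^0(\Q,M)=H^0(\Q,M^*)=0$ and $\cL_\infty=0$ — reads
\[
\dim_\kappa\op{Sel}_{\op{Gr}}(A_f[\varpi]/\Q)-\dim_\kappa\op{Sel}_{\op{Gr}, \perp}(A_f[\varpi]^*/\Q)=\sum_{\ell\in S}\bigl(\dim_\kappa\cL_\ell-\dim_\kappa H^0(\Q_\ell,M)\bigr)-\dim_\kappa H^0(\Q_\infty,M).
\]
For $\ell\in S$ with $\ell\neq p$ one has $\cL_\ell=H^1_{\op{nr}}(\Q_\ell,M)=H^1(\op{G}_\ell/\op{I}_\ell,M^{\op{I}_\ell})$, and $\dim_\kappa\cL_\ell=\dim_\kappa H^0(\Q_\ell,M)$ since over the procyclic group $\op{G}_\ell/\op{I}_\ell$ the invariants and coinvariants of a finite module have equal cardinality; thus these terms drop out and only $\ell=p$ survives, contributing $\dim_\kappa H^1_{\op{Gr}}(\Q_p,A_f[\varpi])-\dim_\kappa H^0(\Q_p,A_f[\varpi])$. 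Substituting the three displayed identities into one another yields exactly the asserted inequality.

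The one step needing genuine care rather than routine bookkeeping is the passage from $H^1(\Q,-)$ to $H^1(\Q_S/\Q,-)$ in \eqref{BK exact diagram}: one must verify that it alters neither $\op{Sel}_{\op{Gr}}(A_f[\varpi]/\Q)$ (already recorded in the text) nor $\op{Sel}_{\op{Gr}}(A_f/\Q)$, that the rows remain exact, and that the snake lemma still applies verbatim — all of which follow from the standard fact that a global cohomology class unramified outside $S$, valued in a Galois module unramified outside $S$, is inflated from $\op{G}_S$. Everything else is routine manipulation of \eqref{PT sequence} and Wiles' formula.
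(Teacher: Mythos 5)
Your proposal is correct and follows essentially the same route as the paper: the snake-lemma identity $\dim_\kappa \operatorname{Sel}_{\operatorname{Gr}}(A_f/\Q)[\varpi]=\dim_\kappa \operatorname{Sel}_{\operatorname{Gr}}(A_f[\varpi]/\Q)+\dim_\kappa \ker h'$, the Poitou--Tate sequence to bound $\dim_\kappa \ker h'$ below by $\sum_{\ell\in S}\dim_\kappa\ker h_\ell-\dim_\kappa\operatorname{Sel}_{\operatorname{Gr},\perp}(A_f[\varpi]^*/\Q)+\dim_\kappa\Sh_S^2(A_f[\varpi])$, and Wiles' formula to convert the Selmer/dual-Selmer difference into the local terms at $p$ and $\infty$. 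Your extra care about computing $\operatorname{coker}\Phi$ and about passing from $H^1(\Q,-)$ to $H^1(\Q_S/\Q,-)$ only makes explicit steps the paper leaves implicit.
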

\begin{proof}
    From \eqref{BK exact diagram} and the arguments in the proof of Proposition \ref{lower bound propn}, there is a short exact sequence
    \[
0 \longrightarrow \operatorname{Sel}_{\operatorname{Gr}}(A_f[\varpi]/\mathbb{Q}) \longrightarrow \operatorname{Sel}_{\operatorname{Gr}}(A_f/\mathbb{Q})[\varpi] \longrightarrow  \ker h'\rightarrow 0.
\]
From the Poitou--Tate sequence (cf. \cite[p.~555, line 7]{tayloricosahedral}), we find that 
\[\dim_{\kappa} \op{ker} h\leq \dim_{\kappa} \op{ker} h'+ \dim_{\kappa} \op{Sel}_{Gr, \perp} (A_f[\varpi]^*/\Q)-\dim_{\kappa} \Sh_S^2(A_f[\varpi]).\]
Thus, one finds that 
\begin{equation}\label{lemma 2.9 e1}
\begin{split}& \dim_{\kappa}\operatorname{Sel}_{\operatorname{Gr}}(A_f/\mathbb{Q})[\varpi]=  \dim_{\kappa}\operatorname{Sel}_{\operatorname{Gr}}(A_f[\varpi]/\mathbb{Q})+\dim_{\kappa}\ker h'\\
\geq &  \dim_{\kappa}\operatorname{Sel}_{\operatorname{Gr}}(A_f[\varpi]/\mathbb{Q})-\dim_{\kappa} \op{Sel}_{Gr, \perp} (A_f[\varpi]^*/\Q)+\sum_{\ell\in S} \dim_{\kappa} \op{ker}h_\ell+\dim_{\kappa} \Sh_S^2(A_f[\varpi]).
\end{split}\end{equation}
It follows from Wiles' formula \cite[Theorem 8.7.9]{NSW} that 
\begin{equation}\label{lemma 2.9 e2}\begin{split} &\dim_{\kappa}\operatorname{Sel}_{\operatorname{Gr}}(A_f[\varpi]/\mathbb{Q})-\dim_{\kappa} \op{Sel}_{Gr, \perp} (A_f[\varpi]^*/\Q)\\ 
=& \dim_{\mathbb{F}_p} H^0(\mathbb{Q}, A_f[\varpi]) - \dim_{\mathbb{F}_p} H^0(\mathbb{Q}, A_f[\varpi]^*) + \sum_{\ell \in S\cup \{\infty\}} \left( \dim_{\mathbb{F}_p} \mathcal{L}_\ell - \dim_{\mathbb{F}_p} H^0(\mathbb{Q}_\ell, A_f[\varpi]) \right)\\
= & \dim_{\kappa} H^1_{\op{Gr}}(\Q_p, A_f[\varpi])-\dim_{\kappa} H^0(\Q_p, A_f[\varpi])- \dim_{\kappa} H^0(\Q_\infty,  A_f[\varpi]).
\end{split}\end{equation}
Combining \eqref{lemma 2.9 e1} and \eqref{lemma 2.9 e2}, we arrive at our result.
\end{proof}

\section{Main results}
\par In this section, we fix a prime $p\geq 5$, a $p$-adic field $\cK$ with valuation ring $\cO$, uniformizer $\varpi$ and residue field $\kappa$. We also fix a surjective and odd Galois representation
\[\bar{\rho}:\op{G}_\Q\rightarrow \op{GL}_2(\kappa)\] with $\op{det}\bar{\rho}=\bar{\chi}$.
As is well known, the results of Khare and Wintenberger imply that there exists a Hecke eigencuspform form $f$ of weight $2$, trivial nebentype and optimal level $N_f=N_{\bar{\rho}}$ such that $\bar{\rho}_f=\bar{\rho}$.
\par Let $\Omega_{\bar{\rho}}$ be the set of prime numbers $\ell$ such that 
\begin{enumerate}
    \item $\ell\nmid N_{\bar{\rho}} p$, 
    \item $\ell\not\equiv \pm 1\mod{p}$,  \item$\bar{\rho}_f(\sigma_\ell)=\mtx{-\ell}{0}{0}{-1}$ for a suitable choice of basis. 
\end{enumerate}

\begin{remark}
    It follows from an easy application of the Chebotarev density theorem that if $p\geq 5$ and $\bar{\rho}_f$ is surjective, then $\Omega_{\bar{\rho}}$ has positive density. In fact, according to \cite[Proposition 5.2]{Rayblms} its density is equal to    \begin{equation}\label{density of Omega_f}
        \mathfrak{d}(\Omega_{\bar{\rho}})=\frac{(p-3)}{(p-1)^2}.
    \end{equation}
\end{remark}


\begin{lemma}\label{vanishing invariants for primes in Omega_f}
Let $g$ be a $p$-ordinary modular form of weight $2$ and trivial nebentypus such that $\bar{\rho}_g \simeq \bar{\rho}_f\simeq \bar{\rho}$. Suppose that $\ell \mid N_g$ is a prime lying in $\Omega_{\bar{\rho}}$. Then
\[
\beta_\ell(A_f) = 0 \quad \text{and} \quad \beta_\ell(A_g) = 0.
\]
\end{lemma}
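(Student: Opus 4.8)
The claim is that for $g$ as in the statement and $\ell \mid N_g$ with $\ell \in \Omega_{\bar\rho}$, both $\beta_\ell(A_f) = 0$ and $\beta_\ell(A_g) = 0$. By Lemma~\ref{beta local lemma}(1), for any $p$-ordinary eigenform $h$ of weight $2$ with $\bar\rho_h \simeq \bar\rho$, one has an injection $\ker h_\ell \hookrightarrow \bigl(H^0(\op{I}_\ell, A_h)/\varpi H^0(\op{I}_\ell, A_h)\bigr)^{\op{G}_\ell/\op{I}_\ell}$, so it suffices to show that $H^0(\op{I}_\ell, A_h)$ is $\varpi$-divisible for $h \in \{f, g\}$. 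First I would note that since $\ell \nmid N_f = N_{\bar\rho}$, the representation $\rho_f$ is unramified at $\ell$ (the conductor of $\rho_f$ has the same prime-to-$p$ part as that of $\bar\rho$, as $\ell \not\equiv \pm 1 \pmod p$ rules out conductor-dropping under reduction), so $H^0(\op{I}_\ell, A_f) = A_f$ is $\varpi$-divisible and $\beta_\ell(A_f) = 0$ — this is really just Lemma~\ref{beta local lemma}(2).

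**The main case.** The substantive part is $\beta_\ell(A_g) = 0$ when $\rho_g$ \emph{is} ramified at $\ell$. Here I would invoke Lemma~\ref{Omega_f lemma} applied to $\varrho = \rho_g|_{\op{G}_\ell}$: since $\bar\varrho = \bar\rho_g \simeq \bar\rho_f$ and $\varrho$ is ramified, after conjugating by a matrix in $\op{GL}_2(\cO)$ we may write $\rho_g(\sigma_\ell) = \begin{pmatrix} \ell x & 0 \\ 0 & x \end{pmatrix}$ and $\rho_g(\tau_\ell) = \begin{pmatrix} 1 & y \\ 0 & 1 \end{pmatrix}$ with $x \equiv -1$, $y \equiv 0 \pmod\varpi$, where $\tau_\ell$ is a topological generator of the tame quotient of $\op{I}_\ell$ (this conjugation does not affect the isomorphism class of $A_g$ as a $\op{G}_\ell$-module, so it is harmless). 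Then $T_g^{\op{I}_\ell}$ consists of the vectors fixed by $\rho_g(\tau_\ell)$, i.e. those killed by $\begin{pmatrix} 0 & y \\ 0 & 0 \end{pmatrix}$; since $y \neq 0$ (as $\varrho$ is genuinely ramified, so $y$ is a nonzero element of $\cO$, albeit in $\varpi\cO$), the fixed sublattice is exactly $\cO \cdot e_1$ where $e_1 = (1,0)^t$. Passing to $A_g = V_g/T_g$, I would compute $H^0(\op{I}_\ell, A_g)$: an element of $V_g/T_g$ is $\op{I}_\ell$-fixed iff $(\rho_g(\tau_\ell) - 1)v \in T_g$, i.e. $y v_2 \in \cO$ where $v = (v_1, v_2)$, i.e. $v_2 \in y^{-1}\cO$; modulo $T_g = \cO e_1 \oplus \cO e_2$ this gives $H^0(\op{I}_\ell, A_g) \cong (y^{-1}\cO/\cO)$ embedded via the second coordinate, a cyclic $\cO$-module of length $v(y) = \op{ord}_\varpi(y)$. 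In particular it is \emph{finite}, hence not $\varpi$-divisible unless it is zero, so the naive argument via $\varpi$-divisibility fails and I must instead show that the $\op{G}_\ell/\op{I}_\ell$-invariants of $H^0(\op{I}_\ell, A_g)/\varpi H^0(\op{I}_\ell, A_g)$ vanish — equivalently, that Frobenius $\sigma_\ell$ acts on the line $H^0(\op{I}_\ell, A_g)[\varpi]$ without eigenvalue $1$.

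**The Frobenius eigenvalue computation.** On the $\sigma_\ell$-module $H^0(\op{I}_\ell, A_g)$, which I identified with the quotient $y^{-1}\cO/\cO$ sitting inside the $e_2$-coordinate of $A_g$, Frobenius acts through its action on the $e_2$-line modulo the twist coming from the conjugated form: since $\rho_g(\sigma_\ell)$ is diagonal $\op{diag}(\ell x, x)$ in this basis, it acts on the $e_2$-component by multiplication by $x$, hence on $H^0(\op{I}_\ell, A_g)[\varpi]$ by $\bar x = -1 \in \kappa^\times$ (using $x \equiv -1 \pmod\varpi$). Wait — I should be careful that conjugating $\tau_\ell$ and $\sigma_\ell$ simultaneously is consistent, and that the relation $\sigma_\ell \tau_\ell \sigma_\ell^{-1} = \tau_\ell^\ell$ in the tame quotient is respected by the given matrices (indeed $\op{diag}(\ell x, x)\begin{pmatrix}1 & y\\0&1\end{pmatrix}\op{diag}(\ell x, x)^{-1} = \begin{pmatrix}1 & \ell y\\0&1\end{pmatrix}$, consistent). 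Since $-1 \neq 1$ in $\kappa$ (as $p$ is odd), the $\op{G}_\ell/\op{I}_\ell$-invariants of the one-dimensional $\kappa$-space $H^0(\op{I}_\ell, A_g)/\varpi H^0(\op{I}_\ell, A_g)$ vanish. By Lemma~\ref{beta local lemma}(1), $\ker h_\ell = 0$, hence $\beta_\ell(A_g) = 0$. (If instead $\rho_g$ happened to be unramified at $\ell$, then $\beta_\ell(A_g) = 0$ directly by Lemma~\ref{beta local lemma}(2), so this case is subsumed.) The main obstacle is precisely the point that $H^0(\op{I}_\ell, A_g)$ is finite and \emph{not} $\varpi$-divisible when $\rho_g$ is ramified at $\ell$ — so one genuinely needs the Frobenius-eigenvalue input from Lemma~\ref{Omega_f lemma}, namely that the unipotent radical forces the invariant line to lie in the $e_2$-eigenspace where Frobenius acts by $x \equiv -1$, and $-1$ is not a $p$-power-congruent-to-$1$ eigenvalue because $\ell \not\equiv \pm 1 \pmod p$ was built into the definition of $\Omega_{\bar\rho}$. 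I would keep the write-up short: cite Lemma~\ref{beta local lemma} and Lemma~\ref{Omega_f lemma}, do the two-line $H^0(\op{I}_\ell,-)$ computation, and conclude with the $\bar x = -1 \neq 1$ observation.
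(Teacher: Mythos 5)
Your proof follows essentially the same route as the paper: $\beta_\ell(A_f)=0$ via Lemma \ref{beta local lemma}(2), then Lemma \ref{Omega_f lemma} to pin down $\rho_g(\sigma_\ell)$ and $\rho_g(\tau_\ell)$, a computation of inertia invariants, and the observation that Frobenius acts on $H^0(\op{I}_\ell,A_g)/\varpi H^0(\op{I}_\ell,A_g)$ by $x\equiv -1\not\equiv 1$, forcing the $\op{G}_\ell/\op{I}_\ell$-invariants to vanish. One small slip: $H^0(\op{I}_\ell,A_g)$ is not finite --- it equals $(\cK/\cO)\cdot e_1\oplus(\varpi^{-n}\cO/\cO)\cdot e_2$ rather than just the $e_2$-part you wrote down --- but since the divisible summand dies after reduction modulo $\varpi$, your identification of the mod-$\varpi$ quotient with $\kappa\cdot e_2$ and the Frobenius-eigenvalue argument are unaffected, so the conclusion stands.
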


\begin{proof}
Since $\ell\nmid N_{\bar{\rho}}$, the vanishing $\beta_\ell(A_f) = 0$ follows from part (2) of Lemma~\ref{beta local lemma}. For the vanishing of $\beta_\ell(A_g)$, part (1) of the same lemma reduces the problem to showing that
\[
\left( \frac{H^0(\mathrm{I}_\ell, A_g)}{\varpi H^0(\mathrm{I}_\ell, A_g)} \right)^{\mathrm{G}_\ell/\mathrm{I}_\ell} = 0.
\]
It follows from \cite[Lemma 4.4]{Rayblms} that $\rho_g$ factors through the maximal pro-$p$ tame inertia quotient of $\mathrm{G}_\ell$, generated by $\sigma_\ell$ and $\tau_\ell$ with the relation $\label{sigma tau reln}\sigma_\ell \tau_\ell \sigma_\ell^{-1} = \tau_\ell^\ell$. Here, $\sigma_\ell$ is a lift of Frobenius and $\tau_\ell$ is a generator of the tame inertia. Further, there exist $x \equiv -1 \pmod{\varpi}$ and $y \equiv 0 \pmod{\varpi}$ such that
\[
\rho_g(\sigma_\ell) = \begin{pmatrix} \ell x & 0 \\ 0 & x \end{pmatrix} \quad \text{and} \quad \rho_g(\tau_\ell) = \begin{pmatrix} 1 & y \\ 0 & 1 \end{pmatrix}.
\] Since $\rho_g$ is ramified at $\ell$, one must have $y \neq 0$. Write $y = \varpi^n u$ with $u \in \mathcal{O}^\times$. It follows that
\[
A_g^{\mathrm{I}_\ell} = \left( \mathcal{K}/\mathcal{O} \right) \cdot e_1 \oplus \left( \varpi^{-n} \mathcal{O}/\mathcal{O} \right) \cdot e_2.
\]
Thus,
\[
\frac{H^0(\mathrm{I}_\ell, A_g)}{\varpi H^0(\mathrm{I}_\ell, A_g)} \simeq \kappa \cdot e_2,
\]
as a $\mathrm{G}_\ell$-module. The element $\sigma_\ell$ acts on $\kappa \cdot e_2$ via multiplication by $x \bmod \varpi$, which is $-1$ by assumption. Since $p \neq 2$, this action is nontrivial, and so the invariants under $\mathrm{G}_\ell/\mathrm{I}_\ell$ vanish:
\[
\left( \frac{H^0(\mathrm{I}_\ell, A_g)}{\varpi H^0(\mathrm{I}_\ell, A_g)} \right)^{\mathrm{G}_\ell/\mathrm{I}_\ell} = 0.
\]
Hence, $\beta_\ell(A_g) = 0$, as desired.
\end{proof}

\begin{lemma}\label{lemma for primes dividing artin cond}
Let $g$ be a $p$-ordinary modular form of weight $2$ such that $\bar{\rho}_g \simeq \bar{\rho}_f \simeq \bar{\rho}$. Assume that $p\nmid N_g$. Let $\ell$ be a prime dividing $N_{\bar{\rho}}$ such that $\ell \not\equiv \pm 1 \pmod{p}$.
Then
\[
\beta_\ell(A_f) = 0 \quad \text{and} \quad \beta_\ell(A_g) = 0.
\]
\end{lemma}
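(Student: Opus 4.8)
The plan is to deduce both vanishing statements from part~(1) of Lemma~\ref{beta local lemma}: it suffices to show that $H^0(\op{I}_\ell,A)=A^{\op{I}_\ell}$ is $\varpi$-divisible for $A=A_f$ and for $A=A_g$, since then $\big(H^0(\op{I}_\ell,A)/\varpi H^0(\op{I}_\ell,A)\big)^{\op{G}_\ell/\op{I}_\ell}=0$ and hence $\op{ker}h_\ell=0$. I would use the elementary fact that a cofinitely generated $\cO$-module $M$ is $\varpi$-divisible exactly when $\dim_\kappa M[\varpi]=\op{corank}_\cO M$. Applied to $M=A^{\op{I}_\ell}$, one has $M[\varpi]=(A[\varpi])^{\op{I}_\ell}$, which is the inertia-invariant subspace of the common residual restriction $\bar{\rho}_f|_{\op{G}_\ell}\simeq\bar{\rho}_g|_{\op{G}_\ell}$ (write $\bar{\rho}$ for this), while $\op{corank}_\cO M=\dim_{\cK}V^{\op{I}_\ell}$, where $V=V_f$ when $A=A_f$ and $V=V_g$ when $A=A_g$ (a standard fact; cf.\ the explicit computation in the proof of Lemma~\ref{vanishing invariants for primes in Omega_f}). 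Thus the whole lemma reduces to the numerical identity
\begin{equation*}
\dim_{\cK}V^{\op{I}_\ell}=\dim_\kappa\bar{\rho}^{\op{I}_\ell}.
\end{equation*}

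To establish this I would compare Artin conductor exponents at $\ell$. For $f$ of optimal level $N_f=N_{\bar{\rho}}$ the conductor exponent $a_\ell(\rho_f)$ equals $v_\ell(N_{\bar{\rho}})=a_\ell(\bar{\rho})$. For $g$, Theorem~\ref{carayol thm} applies (note $N_g\in\mathcal{S}(\bar{\rho})$ automatically), and the hypotheses $\ell\mid N_{\bar{\rho}}$ together with $\ell\not\equiv\pm1\pmod p$ rule out every one of Carayol's cases (1)--(3), forcing $\alpha(\ell)=0$, i.e.\ $v_\ell(N_g)=v_\ell(N_{\bar{\rho}})$; hence $a_\ell(\rho_g)=a_\ell(\bar{\rho})$ as well. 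Now write $a_\ell(\rho)=\big(2-\dim_{\cK}V^{\op{I}_\ell}\big)+\op{Sw}_\ell(\rho)$ and $a_\ell(\bar{\rho})=\big(2-\dim_\kappa\bar{\rho}^{\op{I}_\ell}\big)+\op{Sw}_\ell(\bar{\rho})$. Since $\ell\ne p$, the Swan conductor of an $\cO$-lattice representation of $\op{G}_\ell$ coincides with that of its reduction modulo $\varpi$ (wild inertia acts through a finite $\ell$-group, over which $\cO$-representations lift their reductions with matching ramification filtration), while reduction of $T^{\op{I}_\ell}$ always yields $\dim_{\cK}V^{\op{I}_\ell}\le\dim_\kappa\bar{\rho}^{\op{I}_\ell}$. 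Comparing the two displayed expressions for the (equal) conductor exponents then forces equality of the invariant dimensions, which is the identity above, proving $\beta_\ell(A_f)=\beta_\ell(A_g)=0$.

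The step that genuinely needs care is controlling the local representation of $g$ at $\ell$: one must ensure the conductor exponent does not drop under reduction beyond what the Swan conductor permits, which is precisely where the hypothesis $\ell\not\equiv\pm1\pmod p$ enters through Carayol's theorem. If one prefers to avoid Swan conductors, the alternative is an explicit case split. If $\bar{\rho}^{\op{I}_\ell}=0$ there is nothing to prove, since then $A_f^{\op{I}_\ell}=A_g^{\op{I}_\ell}=0$. If $\bar{\rho}^{\op{I}_\ell}\ne0$, then since $\bar{\rho}$ is ramified at $\ell$ one has $\dim_\kappa\bar{\rho}^{\op{I}_\ell}=1$, and because $\det\bar{\rho}=\bar{\chi}$ is unramified at $\ell$, $\bar{\rho}|_{\op{I}_\ell}$ is a nontrivial unipotent with $a_\ell(\bar{\rho})=1$; Carayol's theorem then pins down $\rho_g|_{\op{G}_\ell}$ as an unramified twist of the Steinberg representation, and one computes $A_g^{\op{I}_\ell}=(\cK/\cO)\cdot e_1$ directly — exactly as in the proof of Lemma~\ref{vanishing invariants for primes in Omega_f}, now using that the residual ramification $\bar{\rho}|_{\op{I}_\ell}$ is nonzero — with the identical computation applying to $f$.
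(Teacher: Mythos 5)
Your proposal is correct, but it follows a genuinely different route from the paper. The paper's proof is purely residual: assuming (implicitly) that $H^0(\op{I}_\ell,A_g[\varpi])\neq 0$, it uses the tame relation $\sigma_\ell\tau_\ell\sigma_\ell^{-1}=\tau_\ell^{\ell}$ together with $\det\bar\rho=\bar\chi$ to show that Frobenius acts on the inertia-fixed line of $\bar\rho$ by $d\ell$ with $d=\pm1$, and then observes that $\beta_\ell>0$ would force the eigenvalue $1$ on a Jordan--H\"older constituent of $H^0(\op{I}_\ell,A_g)/\varpi$, hence $\ell\equiv\pm1\pmod p$, a contradiction; in particular it never uses anything about $\rho_g$ at $\ell$ beyond its reduction, so it applies to any lattice lifting $\bar\rho$. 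You instead compare conductor exponents: Carayol's theorem (Theorem~\ref{carayol thm}) is where $\ell\not\equiv\pm1\pmod p$ enters, forcing $v_\ell(N_g)=v_\ell(N_{\bar\rho})$, and then the conductor formula plus invariance of the Swan conductor under reduction (valid since $\ell\neq p$) gives $\dim_{\cK}V^{\op{I}_\ell}=\dim_\kappa\bar\rho^{\op{I}_\ell}$, i.e.\ $H^0(\op{I}_\ell,A)$ is $\varpi$-divisible, which is strictly stronger than $\beta_\ell=0$ and is in the spirit of Greenberg--Vatsal/Emerton--Pollack--Weston; your second, Steinberg-based variant is essentially the computation of Lemma~\ref{vanishing invariants for primes in Omega_f} run with unit $y$, and is also fine. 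The trade-offs: your argument requires $N_g$ to be the conductor of $\rho_g$ (so $g$ new, plus local--global compatibility at $\ell$) and the standard Swan-reduction fact, inputs the paper avoids; in return it shows $\beta_\ell(A_f)=0$ for every $\ell\mid N_{\bar\rho}$ without the congruence hypothesis (since $N_f=N_{\bar\rho}$ is optimal) and yields divisibility of the local invariants. One small simplification: you do not actually need the ``standard fact'' $\op{corank}_\cO A^{\op{I}_\ell}=\dim_\cK V^{\op{I}_\ell}$, since the chain $\dim_\cK V^{\op{I}_\ell}\leq\op{corank}_\cO A^{\op{I}_\ell}\leq\dim_\kappa A^{\op{I}_\ell}[\varpi]\leq\dim_\kappa\bar\rho^{\op{I}_\ell}$ collapses to equalities once the two ends agree, giving divisibility directly.
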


\begin{proof}
Let $h$ denote $f$ or $g$. Assume by way of contradiction that $\beta_\ell(A_h)>0$. By Part (1) of Lemma \ref{beta local lemma} we have $H^{0}\left(I_{\ell}, A_{h}\right) \neq 0$, hence $H^{0}\left(I_{\ell}, A_{h}[\varpi]\right) \neq 0$. Since $\det\bar{\rho}$ is the mod $p$ cyclotomic character and $\ell\neq p$, we find that $\op{det}(\bar{\rho}_{|\op{I}_\ell})=1$. Consequently, there exists a basis $(\bar{e_1}, \bar{e_2})$ with respect to which the restriction $\bar{\rho}|_{\mathrm{I}_\ell}$ takes the form
\[
\bar{\rho}|_{\mathrm{I}_\ell} = \begin{pmatrix} 1 & * \\ 0 & 1 \end{pmatrix},
\]
so that $\bar{\rho}$ factors through the maximal pro-$p$ tame inertia quotient of $\mathrm{G}_\ell$, generated by $\sigma_\ell$ and $\tau_\ell$ (as in the proof of Lemma \ref{vanishing invariants for primes in Omega_f}).
Since $\ell \mid N_{\bar{\rho}}$, the representation $\bar{\rho}$ is ramified at $\ell$. Therefore there exists $y\neq 0$ for which $\bar{\rho}(\tau_\ell) = \begin{pmatrix} 1 & y \\ 0 & 1 \end{pmatrix}.$ We also write $\bar{\rho}(\sigma_\ell) = \begin{pmatrix} a & b \\ c & d \end{pmatrix}$. From the relation $\sigma_\ell \tau_\ell \sigma_\ell^{-1}=\tau_\ell^\ell$, we find that
\[
\bar{\rho}(\sigma_\ell) \bar{\rho}(\tau_\ell) = \begin{pmatrix} a & ay + b \\ c & cy + d \end{pmatrix},
\]
and
\[
\bar{\rho}(\tau_\ell)^\ell \bar{\rho}(\sigma_\ell) = \begin{pmatrix} 1 & \ell y \\ 0 & 1 \end{pmatrix} \begin{pmatrix} a & b \\ c & d \end{pmatrix} = \begin{pmatrix} a + c \ell y & b + d \ell y \\ c & d \end{pmatrix}.
\]
Equating these matrices, we obtain $c = 0$ and $a = d\ell$.
Hence,
\begin{equation}\label{bar rho ell}
\bar{\rho}(\sigma_\ell) = \begin{pmatrix} d\ell & b \\ 0 & d \end{pmatrix}.
\end{equation}
Note that $\det \bar{\rho}(\sigma_\ell)=\bar{\chi}(\sigma_\ell) = \ell$, and thus we conclude that $d^2 \ell = \ell$, and consequently, $d = \pm 1$. 
\par Since $\bar{\rho}\left(\tau_{\ell}\right)-1=\left(\begin{array}{ll}0 & y \\ 0 & 0\end{array}\right)$, we see that $\rho\left(\tau_{\ell}\right)$ has Smith Normal Form $\left(\begin{array}{cc}1 & 0 \\ 0 & w\end{array}\right)$ for some $w \in(\varpi)$. If $w=0$ then $A_{h}^{I_{\ell}} / \varpi A_{h}^{I_{\ell}}=0$, contradicting $\beta_\ell\left(A_{h}\right)>0$. Thus $w \neq 0$, giving $A_{h}^{I_{\ell}}=\left(\varpi^{-N} \mathcal{O} / \mathcal{O}\right) \cdot \gamma$ for some $N>0$ and some $\gamma$ which is necessarily congruent to $\bar{e}_{1}$ modulo $\varpi$. Since $\ell \neq \pm 1(\bmod \varpi)$, we see from (3.2) that the action of $\sigma_{\ell}-1$ on $A_{h}^{I_{\ell}} / \varpi A_{h}^{I_{\ell}}$ is invertible, hence $\left(A_{h}^{I_{\ell}} / \varpi A_{h}^{I_{\ell}}\right)^{G_{\ell} / I_{\ell}}=0$. Part (1) of Lemma \ref{beta local lemma} gives $\beta_\ell\left(A_{h}\right)=0$. This is a contradiction and the proof is complete. \end{proof}

\begin{proposition}\label{main propn}
Let $g$ be a $p$-ordinary modular form of weight $2$ such that $\bar{\rho}_g \simeq \bar{\rho}_f \simeq \bar{\rho}$. Assume the following:
\begin{enumerate}
    \item No prime $\ell \equiv \pm 1 \pmod{p}$ divides $N_{\bar{\rho}}$.
    \item Every prime dividing $N_g/N_{\bar{\rho}}$ lies in $\Omega_{\bar{\rho}}$.
\end{enumerate}
Then
\[
\dim_{\kappa} \mathrm{Sel}_{\mathrm{Gr}}(A_f/\Q)[\varpi] = \dim_{\kappa} \mathrm{Sel}_{\mathrm{Gr}}(A_g/\Q)[\varpi].
\]
\end{proposition}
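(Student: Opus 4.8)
The plan is to compare each of $\operatorname{Sel}_{\operatorname{Gr}}(A_f/\Q)[\varpi]$ and $\operatorname{Sel}_{\operatorname{Gr}}(A_g/\Q)[\varpi]$ with the residual Selmer group attached to $\bar{\rho}\simeq A_f[\varpi]\simeq A_g[\varpi]$, and to show that in both cases the comparison map $\alpha$ of \eqref{BK exact diagram} is an isomorphism. Since $\bar{\rho}$ is absolutely irreducible one has $H^0(\Q,A_f[\varpi])=H^0(\Q,A_g[\varpi])=0$, so the argument of Proposition~\ref{lower bound propn} applies verbatim to both $f$ and $g$, producing the left exact sequences \eqref{selmer left exact} in which the quotient $\operatorname{Sel}_{\operatorname{Gr}}(A/\Q)[\varpi]\big/\operatorname{Sel}_{\operatorname{Gr}}(A[\varpi]/\Q)$ embeds into $\bigoplus_\ell\ker h_\ell$. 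Thus it suffices to prove that $\beta_\ell(A_f)=0$ and $\beta_\ell(A_g)=0$ for \emph{every} prime $\ell$, and then to identify the two residual Selmer groups; the proposition follows by stringing together the resulting equalities of dimensions.

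First I would dispatch $f$. At $\ell=p$ the vanishing $\beta_p(A_f)=0$ is Lemma~\ref{beta local lemma}(3). If $\ell\neq p$ and $\ell\nmid N_{\bar{\rho}}=N_f$, then $\rho_f$ is unramified at $\ell$ and Lemma~\ref{beta local lemma}(2) gives $\beta_\ell(A_f)=0$. If $\ell\mid N_{\bar{\rho}}$, then $\ell\neq p$ (as $p\nmid N_{\bar{\rho}}$), and hypothesis (1) gives $\ell\not\equiv\pm1\pmod p$, so Lemma~\ref{lemma for primes dividing artin cond}, applied with $g=f$, yields $\beta_\ell(A_f)=0$. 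Hence $\dim_{\kappa}\operatorname{Sel}_{\operatorname{Gr}}(A_f/\Q)[\varpi]=\dim_{\kappa}\operatorname{Sel}_{\operatorname{Gr}}(A_f[\varpi]/\Q)$. The same enumeration handles $g$: at $p$, Lemma~\ref{beta local lemma}(3) applies (its proof uses only that $g$ is $p$-ordinary, so $A_g^-$ is unramified and $\varpi$-divisible); for $\ell\nmid N_gp$ the argument of Lemma~\ref{beta local lemma}(2) gives $\beta_\ell(A_g)=0$; for $\ell\mid N_{\bar{\rho}}$, hypothesis (1) and Lemma~\ref{lemma for primes dividing artin cond} give $\beta_\ell(A_g)=0$; and for $\ell\mid N_g/N_{\bar{\rho}}$, hypothesis (2) puts $\ell\in\Omega_{\bar{\rho}}$, so Lemma~\ref{vanishing invariants for primes in Omega_f} gives $\beta_\ell(A_g)=0$. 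Therefore $\dim_{\kappa}\operatorname{Sel}_{\operatorname{Gr}}(A_g/\Q)[\varpi]=\dim_{\kappa}\operatorname{Sel}_{\operatorname{Gr}}(A_g[\varpi]/\Q)$.

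It remains to check that $\operatorname{Sel}_{\operatorname{Gr}}(A_f[\varpi]/\Q)=\operatorname{Sel}_{\operatorname{Gr}}(A_g[\varpi]/\Q)$. Because $\bar{\rho}$ is absolutely irreducible, the isomorphisms $A_f[\varpi]\simeq\bar{\rho}\simeq A_g[\varpi]$ of $\op{G}_\Q$-modules are canonical up to scalar, and both residual Selmer groups may be computed over one common finite set $S$ of primes (e.g.\ those dividing $N_gp$), the extra local conditions being automatically satisfied by classes in $H^1(\Q_S/\Q,\bar{\rho})$. For $\ell\neq p$ the Greenberg condition is $H^1_{\operatorname{nr}}(\Q_\ell,-)$, which depends only on the $\op{G}_\ell$-module and hence matches under the identification. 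At $p$, one checks that $A_f^-[\varpi]$ is precisely the maximal unramified quotient of $\bar{\rho}|_{\op{G}_p}$ — here one uses $p\geq5$, so $\bar{\chi}|_{\op{I}_p}$ is nontrivial, together with $p$-ordinarity in weight $2$, which forces the two diagonal characters of $\bar{\rho}|_{\op{G}_p}$ to be distinct — and likewise for $A_g^-[\varpi]$, so the conditions $H^1_{\operatorname{Gr}}(\Q_p,-)=\ker\!\big(H^1(\Q_p,-)\to H^1(\op{I}_p,(-)^-)\big)$ agree as well. Hence the residual Selmer groups coincide, and combining the three displayed equalities finishes the proof.

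The main obstacle is the last paragraph: one must verify that the local condition at $p$ defining the residual Selmer group is intrinsic to $\bar{\rho}$, i.e.\ that $A_f^-[\varpi]$ (resp.\ $A_g^-[\varpi]$) is genuinely the canonical unramified quotient of $\bar{\rho}|_{\op{G}_p}$ and does not secretly depend on the lift $\rho_f$ (resp.\ $\rho_g$). Everything else is bookkeeping over the finite list of ramified primes, already covered by Lemmas~\ref{beta local lemma}, \ref{vanishing invariants for primes in Omega_f} and \ref{lemma for primes dividing artin cond}.
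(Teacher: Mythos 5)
Your proposal is correct and follows essentially the same route as the paper: reduce both $\op{Sel}_{\op{Gr}}(A_f/\Q)[\varpi]$ and $\op{Sel}_{\op{Gr}}(A_g/\Q)[\varpi]$ to the residual Selmer group by proving $\beta_\ell=0$ at every prime, using Lemma~\ref{beta local lemma} at $p$ and at unramified primes, Lemma~\ref{vanishing invariants for primes in Omega_f} at primes of $N_g/N_{\bar{\rho}}$, and Lemma~\ref{lemma for primes dividing artin cond} at primes of $N_{\bar{\rho}}$. The only difference is your final paragraph checking that the residual Greenberg condition at $p$ is intrinsic to $\bar{\rho}$ (uniqueness of the unramified quotient line, since the subcharacter restricts to $\bar{\chi}$ on $\op{I}_p$), a point the paper dispatches with the single remark ``since $\bar{\rho}_f\simeq\bar{\rho}_g$''; your verification is correct and makes that implicit step explicit.
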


\begin{proof}
Fix $h \in \{f, g\}$. We claim that
\[
\dim_{\kappa} \mathrm{Sel}_{\mathrm{Gr}}(A_h/\Q)[\varpi] = \dim_{\kappa} \mathrm{Sel}_{\mathrm{Gr}}(A_h[\varpi]/\Q).
\]
\noindent By the exactness of the sequence
\begin{equation}\label{boring exact sequence}
0 \to \mathrm{Sel}_{\mathrm{Gr}}(A_h[\varpi]/\Q) \to \mathrm{Sel}_{\mathrm{Gr}}(A_h/\Q)[\varpi] \to \bigoplus_\ell \op{ker}h_\ell,
\end{equation}
we have the inequality
\[
\dim_{\kappa} \mathrm{Sel}_{\mathrm{Gr}}(A_h[\varpi]/\Q) \leq \dim_{\kappa} \mathrm{Sel}_{\mathrm{Gr}}(A_h/\Q)[\varpi],
\]
and further,
\[
\dim_{\kappa} \mathrm{Sel}_{\mathrm{Gr}}(A_h/\Q)[\varpi] \leq \dim_{\kappa} \mathrm{Sel}_{\mathrm{Gr}}(A_h[\varpi]/\Q) + \sum_\ell \beta_\ell(A_h).
\]
It therefore suffices to show that $\beta_\ell(A_h) = 0$ for all primes $\ell$.

By Lemma~\ref{beta local lemma}, we have $\beta_\ell(A_h) = 0$ for $\ell = p$ and for all $\ell \nmid N_h$. For the remaining primes $\ell \mid N_h$, there are two cases:

\begin{itemize}
    \item If $\ell \mid (N_h/N_{\bar{\rho}})$, then $\ell \in \Omega_{\bar{\rho}}$ by assumption. Lemma~\ref{vanishing invariants for primes in Omega_f} then implies $\beta_\ell(A_h) = 0$.
    
    \item If $\ell \mid N_{\bar{\rho}}$, then $\ell \not\equiv \pm 1 \pmod{p}$ by assumption. Lemma~\ref{lemma for primes dividing artin cond} applies, and again we conclude $\beta_\ell(A_h) = 0$.
\end{itemize}

Hence, we obtain the desired equality:
\begin{equation}\label{dim equality}\dim_{\kappa} \mathrm{Sel}_{\mathrm{Gr}}(A_h/\Q)[\varpi] = \dim_{\kappa} \mathrm{Sel}_{\mathrm{Gr}}(A_h[\varpi]/\Q).
\end{equation}
The inertia group at $p$ acts on $A_h[\varpi]$ via $\mtx{\bar{\chi}}{\ast}{0}{1}$ and $A_h[\varpi]^+$ is the $1$-dimensional submodule on which $\op{I}_p$ acts via the mod-$p$ cyclotomic character and $A_h[\varpi]^-$ is the unique quotient on which $\op{I}_p$ acts trivially. Thus $A_h[\varpi]$ determines the $1$-dimensional modules $A_h[\varpi]^\pm$. 
Since $\bar{\rho}_f \simeq \bar{\rho}_g$, we deduce that
\[\dim_{\kappa} \mathrm{Sel}_{\mathrm{Gr}}(A_f[\varpi]/\Q)=\dim_{\kappa} \mathrm{Sel}_{\mathrm{Gr}}(A_g[\varpi]/\Q).\]
From \eqref{dim equality} it follows that 
\[\dim_{\kappa} \mathrm{Sel}_{\mathrm{Gr}}(A_f/\Q)[\varpi]=\dim_{\kappa} \mathrm{Sel}_{\mathrm{Gr}}(A_g/\Q)[\varpi].\]
\noindent This completes the proof.
\end{proof} Before proving our main theorem, we recall the following well known result of Serre.

\begin{theorem}[Serre]\label{Serre thm}
Let \( \Omega \) be a set of primes with Dirichlet density \( \delta > 0 \). For \( Y > 0 \), let \( M_\Omega(Y) \) denote the number of squarefree natural numbers \( M \leq Y \) divisible only by primes in \( \Omega \). Then
\[
M_\Omega(Y) \gg Y (\log Y)^{\delta - 1}.
\]
\end{theorem}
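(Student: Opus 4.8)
This is Serre's theorem on divisibility properties of multiplicative functions, so one option is simply to cite it; here is the route I would take to prove it. The plan is to encode the count as the coefficient sum of a Dirichlet series and to read off its growth by a Tauberian argument. Write $\mathcal{M}$ for the set of squarefree natural numbers all of whose prime factors lie in $\Omega$, so that $M_\Omega(Y)=\#\{M\in\mathcal{M}:M\le Y\}$, and form
\[
F(s):=\sum_{M\in\mathcal{M}}M^{-s}=\prod_{\ell\in\Omega}\bigl(1+\ell^{-s}\bigr),
\]
which converges absolutely for $\operatorname{Re}(s)>1$ and has non-negative coefficients. The heuristic driving everything is that $F(s)$ has a branch-point singularity of type $(s-1)^{-\delta}$ at $s=1$, mirroring $\zeta(s)^{\delta}$, and that a Tauberian theorem then converts this into the asymptotic size of $M_\Omega(Y)$.

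First I would locate the singularity of $F$. Taking logarithms, $\log F(s)=\sum_{\ell\in\Omega}\log(1+\ell^{-s})=\sum_{\ell\in\Omega}\ell^{-s}+H(s)$, where $H$ is holomorphic for $\operatorname{Re}(s)>\tfrac12$ because $\sum_\ell\ell^{-2s}$ converges there. The density hypothesis on $\Omega$ is precisely the assertion that $\sum_{\ell\in\Omega}\ell^{-s}-\delta\log\frac{1}{s-1}$ is well behaved as $s\to1^{+}$; in the situations where the theorem is applied (and in Serre's own setting) $\Omega$ is moreover cut out by a Chebotarev condition, so this difference in fact extends holomorphically across the line $\operatorname{Re}(s)=1$, via the analytic continuation and non-vanishing on $\operatorname{Re}(s)=1$ of the relevant Dedekind/Artin $L$-functions. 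Combining these, one writes $F(s)=A(s)(s-1)^{-\delta}+B(s)$ with $A,B$ holomorphic on an open neighbourhood of $\{\operatorname{Re}(s)\ge1\}$ and $A(1)\ne0$.

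With this input in hand I would invoke the Tauberian theorem of Ikehara--Delange (equivalently, the Selberg--Delange method): for a Dirichlet series with non-negative coefficients of the above shape the partial sums satisfy $M_\Omega(Y)\sim\frac{A(1)}{\Gamma(\delta)}\,Y(\log Y)^{\delta-1}$ as $Y\to\infty$, which in particular gives the stated lower bound $M_\Omega(Y)\gg Y(\log Y)^{\delta-1}$. A variant avoiding complex analysis is Wirsing's mean-value theorem for non-negative multiplicative functions applied to $\mathbf{1}_{\mathcal{M}}$: here $g(\ell)=1$ exactly when $\ell\in\Omega$ and $g(\ell^{k})=0$ for $k\ge2$, so one needs only $\sum_{\ell\le x,\,\ell\in\Omega}\log\ell\sim\delta x$, and Wirsing's theorem returns the same asymptotic.

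The step I expect to require the most care is the Tauberian passage from the behaviour of $F$ near $s=1$ to the asymptotics of $M_\Omega(Y)$: one must control $F$ along the whole line $\operatorname{Re}(s)=1$, not merely as $s\to1^{+}$, and it is here that the hypothesis on $\Omega$ must be genuinely exploited — for the Chebotarev sets $\Omega=\Omega_{\bar\rho}$ relevant to this paper that control is exactly the classical non-vanishing of Artin $L$-functions on $\operatorname{Re}(s)=1$, which is why the result is quoted in this Chebotarev-friendly form. Given that input, the remaining bookkeeping (splitting off $A(s)(s-1)^{-\delta}$ and appealing to Ikehara--Delange, or verifying Wirsing's hypotheses) is routine.
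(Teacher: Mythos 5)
The paper's entire proof is the citation to Serre's Th\'eor\`eme 2.4 in \emph{Divisibilit\'e de certaines fonctions arithm\'etiques}, i.e.\ exactly your first option, and your sketch (Euler product $\prod_{\ell\in\Omega}(1+\ell^{-s})$, singularity of type $(s-1)^{-\delta}$ at $s=1$, Delange--Selberg Tauberian argument, with the needed regularity along $\operatorname{Re}(s)=1$ supplied by the Chebotarev/Artin $L$-function input) is precisely the argument underlying that theorem. So the proposal is correct and takes essentially the same route as the paper; your observation that Dirichlet density alone must be upgraded to regularity near $\operatorname{Re}(s)=1$ is exactly the hypothesis under which Serre proves the result, and it holds for the sets $\Omega_{\bar\rho}$ used here.
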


\begin{proof}
This follows from \cite[Theorem 2.4]{serredivisibilite}.
\end{proof}

\begin{proof}[Proof of Theorem \ref{main thm of intro}]
Let \( \Omega := \Omega_{\bar{\rho}} \), and consider any \( M \in M_\Omega(Y) \). Set \( N := M N_{\bar{\rho}} \). Then \( N \in \cS(\bar{\rho}) \), and by Theorem~\ref{DT theorem}, there exists a Hecke eigencuspform \( g \) such that \( \bar{\rho}_g \simeq \bar{\rho}_f \) and \( N = N_g \). By Proposition~\ref{main propn}, we have
\[
\dim_\kappa \op{Sel}_{\op{Gr}}(A_f/\Q)[\varpi] = \dim_\kappa \op{Sel}_{\op{Gr}}(A_g/\Q)[\varpi].
\]
Thus, \( N \in \cS_f(X) \), and hence
\[
N_f(X) \geq M_{\Omega}(X / N_{\bar{\rho}}).
\]
Applying Theorem~\ref{Serre thm}, we obtain
\[
N_f(X) \gg X (\log X)^{\alpha - 1},
\]
where \( \alpha = \mathfrak{d}(\Omega) = \frac{p - 3}{(p - 1)^2} \) by \eqref{density of Omega_f}.
\end{proof}

\bibliographystyle{alpha}
\bibliography{references}
\end{document}